\newtheorem{theorem}{Theorem}
\newtheorem{lemma}{Lemma}
\newtheorem{proposition}{Proposition}
\theoremstyle{remark}
\date{\today}
\title[A nonlinear elastic wave equation]{Determination of the density in a nonlinear elastic wave equation}
  \author[G. Uhlmann]{Gunther Uhlmann}
\address{Department of Mathematics, University of Washington, Seattle, WA 98195, USA; Institute for Advanced Study, 
The Hong Kong University of Science and Technology, Kowloon, Hong Kong, China (\tt{gunther@math.washington.edu})
}
  \author[J. Zhai]{Jian Zhai}
\address{School of Mathematical Sciences,
  Fudan University, Shanghai 200433, China
  (\tt{jianzhai@fudan.edu.cn}).}
    \thanks{J. Zhai is supported by National Key Research and Development Programs of China (No. 2023YFA1009103), Science and Technology Commission of Shanghai Municipality (23JC1400501)}
\begin{document}
\begin{abstract}
This is a continuation of our study \cite{uhlmann2021inverse} on an inverse boundary value problem for a nonlinear elastic wave equation. We prove that all the linear and nonlinear coefficients can be recovered from the displacement-to-traction map, including the density, under some natural geometric conditions on the wavespeeds.
\end{abstract}
\keywords{elastic waves, inverse boundary value problem, quasilinear equation, Gaussian beams}
\maketitle

\section{Introduction}
Let $\Omega\subset\mathbb{R}^3$ be a bounded domain with smooth boundary $\partial\Omega$. Denote $x=(x_1,x_2,x_3)$ to be the Cartesian coordinates. Here $\Omega$ represents an elastic, nonhomogeneous, isotropic object with Lam\'e parameters $\lambda,\mu$ and density $\rho$. We take into account the nonlinear behavior of elastic medium in this article. Let the vector $u=(u_1,u_2,u_3)$ be the displacement vector, and the (nonlinear) strain tensor is
\[
\varepsilon_{ij}(u)=\frac{1}{2}\left(\frac{\partial u_i}{\partial x_j}+\frac{\partial u_j}{\partial x_i}+\frac{\partial u_k}{\partial x_i}\frac{\partial u_k}{\partial x_j}\right).
\]
We consider the nonlinear model of elasticity as in \cite{de2018nonlinear}, whereas the stress tensor is of the form,
\begin{equation}\label{Sform}
\begin{split}
S_{ij}=&\lambda \varepsilon_{mm}\delta_{ij}+\lambda\widetilde{\varepsilon}_{mm}\frac{\partial u_i}{\partial x_j}+2\mu\left(\varepsilon_{ij}+\widetilde{\varepsilon}_{jn}\frac{\partial u_i}{\partial x_n}\right)\\
&+ \mathscr{A}\widetilde{\varepsilon}_{in}\widetilde{\varepsilon}_{jn}+\mathscr{B}(2\widetilde{\varepsilon}_{nn}\widetilde{\varepsilon}_{ij}+\widetilde{\varepsilon}_{mn}\widetilde{\varepsilon}_{mn}\delta_{ij})+\mathscr{C}\widetilde{\varepsilon}_{mm}\widetilde{\varepsilon}_{nn}\delta_{ij}+\mathcal{O}(u^3),
\end{split}
\end{equation}
where $\widetilde{\varepsilon}$ is the linearized strain tensor
\[
\widetilde{\varepsilon}_{ij}(u)=\frac{1}{2}\left(\frac{\partial u_i}{\partial x_j}+\frac{\partial u_j}{\partial x_i}\right),
\]
For the rest of the paper, we will just discard the $\mathcal{O}(u^3)$ terms in \eqref{Sform}.\\

We consider the initial boundary value problem
\begin{equation}\label{elastic_eq}
\begin{split}
&\rho\frac{\partial^2u}{\partial t^2}-\nabla\cdot S(x,u)=0,\quad (t,x)\in (0,T)\times\Omega,\\
&u(t,x)=f(t,x),\quad (t,x)\in (0,T)\times\partial \Omega,\\
&u(0,x)=\frac{\partial}{\partial t}u(0,x)=0,\quad x\in \Omega,
\end{split}
\end{equation}
where $S$ is of the form \eqref{Sform}.
Denote the displacement-to-traction map as
 \[
\Lambda:f=u\vert_{(0,T)\times\partial\Omega}\rightarrow\nu\cdot S(x,u) \vert_{(0,T)\times\partial\Omega},
\]
where $\nu$ is the exterior normal unit vector to $\partial \Omega$. We consider the inverse problem of determining $\lambda,\mu,\rho,\mathscr{A},\mathscr{B},\mathscr{C}$ from $\Lambda$.  The well-posedness of the Dirichlet problem with small boundary data is established in \cite{de2018nonlinear}. \\

Closely related is the inverse boundary value problem for the linear elastic wave equations
\begin{equation}\label{elastic_linear_eq}
\begin{split}
&\rho\frac{\partial^2u}{\partial t^2}-\nabla\cdot S^L(x,u)=0,\quad (t,x)\in (0,T)\times\Omega,\\
&u(t,x)=f(t,x),\quad (t,x)\in (0,T)\times\partial \Omega,\\
&u(0,x)=\frac{\partial}{\partial t}u(0,x)=0,\quad x\in \Omega,
\end{split}
\end{equation}
for which one wants to recover $\lambda,\mu,\rho$
from the (linearized) Dirichlet-to-Neumann (DtN) map defined as
\[
\Lambda^{lin}:f= u\vert_{(0,T)\times\partial\Omega}\rightarrow \nu\cdot S^L(x,u)\vert_{(0,T)\times\partial\Omega}.
\]
Here $S^L$ is the linearized stress
\[
S^L_{ij}(x,u)=\lambda \widetilde{\varepsilon}_{mm}\delta_{ij}+2\mu \widetilde{\varepsilon}_{ij}.
\]

Throughout the paper, we assume the strong ellipticity condition
\begin{equation}\label{elliptiticity}
\mu>0,\quad 3\lambda+2\mu>0~\text{on}~\overline{\Omega}.
\end{equation}
Denote
\[
c_P=\sqrt{\frac{\lambda+2\mu}{\rho}},\quad c_S=\sqrt{\frac{\mu}{\rho}},
\]
which are actually the wavespeeds for \textit{P}- and \textit{S}- waves respectively. By the assumption \eqref{elliptiticity}, we have $c_P>c_S$ in $\overline{\Omega}$. Denote the Riemannian metrics associated with the $P/S$- wavespeeds to be
\[
g_{P/S}=c_{P/S}^{-2}\mathrm{d}s^2,
\]
where $\mathrm{d}s^2$ is the Euclidean metric. Now, one can consider $(\Omega,g_\bullet)$ as a compact Riemannian manifold with boundary $\partial\Omega$, where $\bullet=P,S$.

We assume that $(\Omega,g_\bullet)$ is non-trapping and $\partial\Omega$ is convex with respect to the metric $g_\bullet$.
Denote $\mathrm{diam}_{P/S}(\Omega)$ be the diameter of $\Omega$ with respect to $g_{P/S}$. More precisely, 
\[
\mathrm{diam}_{P/S}(\Omega)=\sup\{\textit{lengths of all geodesics in }(\Omega,g_{P/S})\}.
\]

The main result of this paper is the following theorem.
\begin{theorem}
Assume $\partial\Omega$ is strictly convex with respect to $g_{P/S}$, and either of the following conditions holds
\begin{enumerate}
\item $(\Omega,g_{P/S})$ is simple;
\item $(\Omega,g_{P/S})$ satisfies the foliation condition.
\end{enumerate}
If $T>2\,\max\{\mathrm{diam}_S(\Omega),\mathrm{diam}_P(\Omega)\}$, then $\Lambda$ determines $\lambda,\mu,\rho,\mathscr{A},\mathscr{B},\mathscr{C}$ in $\overline{\Omega}$ uniquely.
\end{theorem}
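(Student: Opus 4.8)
The plan is to exploit the quasilinear structure of \eqref{elastic_eq} by a two-parameter linearization, turning the nonlinear interaction of elastic waves into an interior source whose propagation to $\partial\Omega$ — recorded by $\Lambda$ — sees the coefficients individually, and in particular breaks the diffeomorphism gauge that prevents $\Lambda^{lin}$ from pinning down $\rho$. First, feeding $\Lambda$ with boundary data $\epsilon f$ and differentiating at $\epsilon=0$ yields the linearized displacement-to-traction map $\Lambda^{lin}$ of \eqref{elastic_linear_eq}. A geometric-optics analysis of $\Lambda^{lin}$ reads off the lens data of both travel-time metrics $g_P$ and $g_S$; under strict convexity together with simplicity (resp. the foliation condition), boundary (resp. lens) rigidity recovers $c_P$ and $c_S$ throughout $\overline\Omega$, and boundary determination recovers the Taylor jets of $\lambda,\mu,\rho$ at $\partial\Omega$. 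This is as far as the linear data reach: $\Lambda^{lin}$ is invariant under interior diffeomorphisms fixing $\partial\Omega$, so it does not separate $\rho$ from $\lambda,\mu$.

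Next comes the second linearization. The nonlinearity of \eqref{elastic_eq} is quadratic: writing $\nabla\cdot S(x,u)=\nabla\cdot S^{L}(u)+\mathcal{Q}(u,u)+\mathcal{O}(u^{3})$, the bilinear form $\mathcal{Q}$ is a contraction of $\nabla u\otimes\nabla u$ whose coefficients are exactly $\lambda,\mu,\mathscr{A},\mathscr{B},\mathscr{C}$ — crucially, $\rho$ does not occur in $\mathcal{Q}$. Taking $f=\epsilon_{1}f_{1}+\epsilon_{2}f_{2}$ and applying $\partial_{\epsilon_{1}}\partial_{\epsilon_{2}}\big|_{\epsilon=0}$, the mixed term $u^{(12)}$ solves \eqref{elastic_linear_eq} with source $2\mathcal{Q}(u^{(1)},u^{(2)})$ (symmetrized), zero Cauchy data and zero Dirichlet data, where $u^{(j)}$ solves \eqref{elastic_linear_eq} with data $f_{j}$. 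Pairing the resulting second-order boundary data against a solution $w$ of \eqref{elastic_linear_eq} with free data $h$ and vanishing data at $t=T$, Green's identity gives
\[
\int_{(0,T)\times\partial\Omega}\bigl\langle \partial_{\epsilon_{1}}\partial_{\epsilon_{2}}\big|_{\epsilon=0}\Lambda(\epsilon_{1}f_{1}+\epsilon_{2}f_{2}),\ h\bigr\rangle\,\mathrm{d}S\,\mathrm{d}t \;=\; \int_{(0,T)\times\Omega}\bigl\langle \mathcal{Q}(u^{(1)},u^{(2)}),\ w\bigr\rangle\,\mathrm{d}x\,\mathrm{d}t,
\]
whose left-hand side is determined by $\Lambda$.

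Fix $y\in\Omega$. Choose $u^{(1)},u^{(2)},w$ to be Gaussian-beam solutions of \eqref{elastic_linear_eq}, each of prescribed type ($P$ or $S$), concentrating as the beam parameter tends to $0$ along $g_{P/S}$-geodesics through $y$ with prescribed tangents and polarizations; these beams are launched from $\partial\Omega$, which is possible since $T>2\max\{\mathrm{diam}_S(\Omega),\mathrm{diam}_P(\Omega)\}$ leaves enough time for a beam to enter, reach $y$, interact, and exit before $t=T$. A stationary-phase evaluation localizes the leading part of the identity to $y$: it equals a universal algebraic contraction of the three wave covectors and polarization vectors against the coefficient tensor at $y$ built from $\lambda,\mu,\mathscr{A},\mathscr{B},\mathscr{C}$, times a nonzero, explicitly computable geometric factor; the first-order amplitude corrections, which involve $\nabla\log\rho$ and $\nabla\log\mu$, are either recovered in advance from $\Lambda^{lin}$ by inverting the relevant geodesic X-ray transforms (injective under our hypotheses) or are cancelled by forming ratios of such identities. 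Letting the tangents, types and polarizations of the three beams vary produces at each $y$ an over-determined linear system for $\lambda(y),\mu(y),\mathscr{A}(y),\mathscr{B}(y),\mathscr{C}(y)$; solving it and using $c_S^{2}=\mu/\rho$ recovers $\rho(y)$. Under the foliation condition this is run as a layer-stripping argument outward from $\partial\Omega$ (under simplicity it applies directly on all of $\Omega$), and boundary determination covers $\partial\Omega$, so all six coefficients are recovered on $\overline\Omega$.

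The crux is the nondegeneracy of this pointwise system: one must show that the contractions produced by the admissible $P/S$ beam interactions span enough relations to isolate all of $\lambda,\mu,\mathscr{A},\mathscr{B},\mathscr{C}$ individually — rather than only the gauge-invariant combinations already visible to $\Lambda^{lin}$ — which forces one to use the precise structure of \eqref{Sform}, in particular that $\lambda$ and $\mu$ enter the quadratic part of $S$ as genuine coefficients. A secondary, more technical difficulty is the elastic Gaussian-beam calculus with two characteristic sheets and polarization bundles: constructing the concentrating solutions with controlled remainders, showing that the oscillatory bilinear source $\mathcal{Q}(u^{(1)},u^{(2)})$ yields a bona fide $u^{(12)}$, and tracking the $P\!\leftrightarrow\!S$ mode conversions so that the genuinely dominant contributions in the stationary-phase analysis are correctly identified. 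Both refine the machinery of \cite{uhlmann2021inverse}.
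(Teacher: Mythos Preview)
Your overall architecture---second linearization, three Gaussian beams, stationary phase at an interior point---is exactly the paper's. But there is a real gap in the pointwise step, and it is precisely where $\rho$ hides.

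You write that the leading stationary-phase term equals ``a universal algebraic contraction \dots\ against the coefficient tensor built from $\lambda,\mu,\mathscr{A},\mathscr{B},\mathscr{C}$, times a nonzero, explicitly computable geometric factor,'' and that $\rho$ only shows up via $\nabla\log\rho$ in the \emph{subleading} amplitude corrections. That is not so for the elastic system. Solving the transport equation along each beam gives a principal amplitude proportional to $\det(Y)^{-1/2}c_\bullet^{-1/2}\rho^{-1/2}$; hence the product of the three leading amplitudes carries an \emph{unknown} factor $\rho^{-3/2}(y)$. The identity you obtain is therefore not a linear system in $\lambda,\mu,\mathscr{A},\mathscr{B},\mathscr{C}$ with known coefficients---it is a system in the five quantities $\rho^{-3/2}\lambda,\ \rho^{-3/2}\mu,\ \rho^{-3/2}\mathscr{A},\ \rho^{-3/2}\mathscr{B},\ \rho^{-3/2}\mathscr{C}$, and you cannot simply ``solve it and use $c_S^2=\mu/\rho$''. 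The paper closes this gap by a specific choice of interactions: two $S$-beams and one $P$-beam with carefully chosen polarizations (first $\alpha^{(1)}=\alpha^{(2)}\perp\mathrm{span}\{\xi^{(2)},\xi^{(0)}\}$, then $\alpha^{(1)},\alpha^{(2)}$ in that plane), which yield $\rho^{-3/2}(\lambda+\mathscr{B})$, $\rho^{-3/2}(4\mu+\mathscr{A})$, and $\rho^{-3/2}(2\mu+2\mathscr{B}+\mathscr{A})$; subtracting gives $\rho^{-3/2}(\lambda+\mu)$, and since $(\lambda+\mu)/\rho$ is already known from the wavespeeds, $\rho$ drops out. Only then do $\mathscr{A},\mathscr{B}$ follow from the same identities. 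Note also that $\mathscr{C}$ is \emph{not} obtained from this pointwise algebra: once $\lambda,\mu,\rho,\mathscr{A},\mathscr{B}$ are known, the paper invokes the Jacobi-weighted ray transform argument of \cite{uhlmann2021inverse} (a $P$-$P$-$P$ interaction) for $\mathscr{C}$.

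Two smaller points. First, the obstruction in $\Lambda^{lin}$ here is not a diffeomorphism gauge (the metrics $g_{P/S}$ are conformally Euclidean and are recovered outright); rather, recovering $\rho$ from $\Lambda^{lin}$ reduces to $s$-injectivity of a tensor ray transform and requires the extraneous condition $c_P\neq 2c_S$, which the nonlinear argument avoids. Second, because $\rho$ is unknown, the reflected beams for the two parameter sets differ, and the paper handles this by switching to the Neumann (traction-free) second linearization $\Lambda''_N$ and constructing the full incident-plus-reflected Gaussian beam packets, proving solvability of the boundary system via the invertibility of the matrix $M_P(\xi)$; your sketch flags mode conversion as ``secondary,'' but without this control you cannot guarantee that the supports of the three full solutions intersect only near $y$.
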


Assume $(\Omega,g)$ is a compact Riemannian manifold with strictly convex boundary $\partial \Omega$.
We recall that (1) $(\Omega,g)$ called to be simple if any two points $x,y\in\Omega$ can be connected by a unique geodesic, contained in $\Omega$, depending smoothly on $x$ and $y$. (2) $(\Omega,g)$ satisfies the foliation condition if there is a smooth strictly convex function $f:M\rightarrow \mathbb{R}$. For more discussions on the foliation condition, we refer to \cite{paternain2019geodesic}. We emphasize here that either of the two geometrical conditions implies that $(\Omega,g)$ is non-trapping. The foliation condition is satisfies if the manifold has negative sectional curvatures, or non-positive sectional curvatures if the manifolds is simply connected. The foliation condition allows for conjugate points.

For the inverse boundary value problem for the \textit{linear} elastic wave equation \eqref{elastic_linear_eq}, the uniqueness of the wavespeeds $c_{P/S}$, under the above two geometrical conditions, is proved in \cite{rachele2000inverse,stefanov2017local} respectively. The problem is reduced to the lens rigidity problem for a Riemannian metric which is conformally Euclidean. The related lens rigidity problem is solved under the simplicity condition in \cite{muhometov1978problem} and under the foliation condition in \cite{stefanov2016boundary}. To recover $\lambda,\mu,\rho$ simultaneously, the uniqueness of $\rho$ needs to be proved additionally. For the uniqueness of the density, the problem can be reduced to the geodesic ray transform of a 2-tensor using \textit{P}-wave measurements (cf. \cite{rachele2003uniqueness,bhattacharyya2018local}), but this reduction requires a technical assumption $c_P\neq 2c_S$. The $s$-injectivity of this geodesic ray transform is proved under the strictly convex foliation condition \cite{stefanov2018inverting}. Under simplicity condition, the $s$-injectivity is proved under extra curvature conditions \cite{sharafutdinov1992integral,dairbekov2006integral,paternain2015invariant}. 

In our previous work \cite{uhlmann2021inverse}, we proved the uniqueness of $\mathscr{A}, \mathscr{B},\mathscr{C}$ assuming $\lambda,\mu,\rho$ are known. For the uniqueness of $\rho$, extra conditions are needed if one resorts to the result for the linear equation as mentioned above. In this paper, we will utilize the nonlinearity to show the uniqueness of $\rho$, without any additional assumptions.

The proof is based on the construction of Gaussian beam solutions. Compared to our previous work \cite{uhlmann2021inverse}, since now the density is not known, the reflection of waves at the boundary is hard to control. We remark here that elastic waves would undergo a $P/S$ mode conversion when reflected. We refer to the work \cite{stefanov2021transmission} for this reflection behavior. The reflection of Gaussian beam solutions is carefully characterized below in Section \ref{gaussianreflection}.\\

The nonlinear interaction of distorted planes was first used in \cite{kurylev2018inverse} to recover a Lorentzian metric in a semilinear wave equation. Since then, many inverse problems for nonlinear hyperbolic equations have been studied. See \cite{uhlmann2021inverse1} for a review. We also refer to \cite{lassas2018inverse,de2018nonlinear,uhlmann2018determination,feizmohammadi2019recovery,uhlmann2021inverse,kurylev2014inverse,wang2019inverse,chen2021inverse,chen2021detection,sa2021singularities,lassas2021stability,lassas2020uniqueness,balehowsky2022inverse,barreto2021recovery,sa2022recovery,hintz2022inverse,hintz2022dirichlet,uhlmann2022inverse,chen2022retrieving} and the references therein. \\

The rest of the paper is organized as follows. In Section \ref{gaussianbeams}, we construct Gaussian beam solutions for the linear elastic wave equations. In Section \ref{gaussianreflection}, the reflection of Gaussian beams on the boundary is carefully characterized. In Section \ref{mainproof}, we give the proof of the main result.
\section{Gaussian beams}\label{gaussianbeams}
In this section, we construct Gaussian beam solutions to the \textit{linear} elastic wave equation
\[
\mathcal{L}_{\lambda,\mu,\rho}u:=\rho\frac{\partial^2u}{\partial t^2}-\nabla\cdot S^L(x,u)=0,
\]
Much of the work has been carried out in \cite{uhlmann2021inverse}.
Gaussian beams have also been used to study various inverse problems for both elliptic and hyperbolic equations \cite{katchalov1998multidimensional,bao2014sensitivity, belishev1996boundary, dos2016calderon,feizmohammadi2019timedependent,feizmohammadi2019recovery,feizmohammadi2019inverse}. For the construction, we need to introduce the Fermi coordinates.
\subsection{Fermi coordinates}
Denote
\[
M=\mathbb{R}\times\Omega,
\]
and consider $M$ as a Lorentzian manifold with metric $\overline{g}_\bullet:=-\mathrm{d}t^2+g_\bullet=-\mathrm{d}t^2+c_\bullet^{-2}\mathrm{d}s^2$, where $\bullet=P$ or $S$. We can extend $g_\bullet$ smoothly to a slightly larger domain $\Omega'$, such that $\Omega\subset\subset\Omega'$, and consider the Lorentzian manifold $(M',\overline{g}_\bullet):=(\mathbb{R}\times \Omega',\overline{g}_\bullet)$. 

Consider a null-geodesic $\vartheta$ in $(M',\overline{g}_\bullet)$. Notice that $\vartheta$ can be expressed as $\vartheta(t)=(t,\gamma(t))$, where $\gamma$ is a unit-speed geodesic in the Riemannian manifold $(\Omega',g_\bullet)$. Assume that $\vartheta$ passes through two boundary points $(t_-,\gamma(t_-))$ and $(t_+,\gamma(t_+))$ of $\partial M$, where $t_-,t_+\in (0,T)$, $t_-<t_+$ and $\gamma(t_-),\gamma(t_+)\in\partial\Omega$. We assume $\vartheta:[t_--\epsilon,t_++\epsilon]\rightarrow M'$, by extending it if necessary, and introduce the Fermi coordinates in a neighborhood of $\vartheta$. We will follow the construction of the coordinates in \cite{feizmohammadi2019timedependent}. See also \cite{kurylev2014inverse}, \cite{uhlmann2018determination}.

Assume $\gamma(t_0)=x_0\in \Omega$ where $t_0\in(t_-,t_+)$. Choose $\alpha_2,\alpha_3\in T_{x_0}\Omega'$ such that $\{\dot{\gamma}(t_0),\alpha_2,\alpha_3\}$ forms an orthonormal basis for $T_{x_0}\Omega'$. Let $s$ denote the arc length along $\gamma$ from $x_0$. We note here that $s$ can be positive or negative, and $(t_0+s,\gamma(t_0+s))=\vartheta(t_0+s)$. For $k=2,3$, let $e_k(s)\in T_{\gamma(t_0+s)}\Omega'$ be the parallel transport of the vector $\alpha_k$ along $\gamma$ to the point $\gamma(t_0+s)$.

Define the coordinate system $(y^0=t,y^1=s,y^2,y^3)$ through $\mathcal{F}_1:\mathbb{R}^{1+3}\rightarrow \mathbb{R}\times\Omega'$:
\[
\mathcal{F}_1(y^0=t,y^1=s,y^2,y^3)=(t,\exp_{\gamma(t_0+s)}\left(y^2e_2(s)+y^3e_3(s)\right)).
\]
Then the null-geodesic $\vartheta$ can be expressed as 
\[
\vartheta=\{t-s=t_0,y^2=y^3=0\}.
\]
Notice that $(y^1=s,y^2,y^3)$ gives a coordinate system on $\Omega'$ in a neighborhood of $\gamma$ such that
\[
g_\bullet\vert_{\gamma}=\sum_{j=1}^3(\mathrm{d}y^j)^2,\quad\text{and}\quad\frac{\partial g_{\bullet,jk}}{\partial y^i}\Big\vert_\gamma=0,~1\leq i,j,k\leq 3.
\]
Under this coordinate system, the Euclidean metric $g_E:=\mathrm{d}s^2$ in a neighborhood of $\gamma$ takes the form
\[
g_E=c_\bullet^2g_\bullet=\sum_{1\leq i,j\leq 3}c^2_\bullet g_{\bullet,ij}\mathrm{d}y^i\mathrm{d}y^j,
\]
and the Christoffel symbols for the Euclidean metric are given by
\begin{equation}\label{Christoffel}
\begin{split}
&\Gamma_{\alpha\beta}^1=-c_\bullet^{-1}\frac{\partial c_\bullet}{\partial s} g_{\bullet,\alpha\beta},\quad \Gamma_{1\alpha}^\beta=\delta^\alpha_\beta c_\bullet^{-1}\frac{\partial c_\bullet}{\partial s},\quad \Gamma_{1\alpha}^1=c_\bullet^{-1}\frac{\partial c_\bullet}{\partial y^\alpha},\\
&\Gamma_{11}^\alpha=-c_\bullet^{-1}g_\bullet^{\alpha\beta}\frac{\partial c_\bullet}{\partial y^\beta},\quad \Gamma_{11}^1=c_\bullet^{-1}\frac{\partial c_\bullet}{\partial s},
\end{split}
\end{equation}
where $\alpha,\beta\in \{2,3\}$.

Introduce the map $(z^0,z'):=(z^0,z^1,z^2,z^3)=\mathcal{F}_2(y^0=t,y^1=s,y^2,y^3)$, where
\[
z^0=\tau=\frac{1}{\sqrt{2}}(t-t_0+s),\quad z^1=r=\frac{1}{\sqrt{2}}(-t+t_0+s),\quad z^j=y^j,\, j=2,3.
\]
The Fermi coordinates $(z^0=\tau,z^1=r,z^2,z^3)$ near $\vartheta$ is given by $\mathcal{F}:\mathbb{R}^{1+3}\rightarrow \mathbb{R}\times\Omega'$, where $\mathcal{F}=\mathcal{F}_1\circ\mathcal{F}_2^{-1}$. Denote $\tau_\pm=\sqrt{2}(t_\pm-t_0)$.
Then on $\vartheta$ we have
\[
\overline{g}_\bullet\vert_\vartheta=2\mathrm{d}\tau\mathrm{d}r+\sum_{j=2}^3(\mathrm{d}z^j)^2\quad\text{and}\quad\frac{\partial \overline{g}_{\bullet,jk}}{\partial z^i}\Big\vert_\vartheta=0,~0\leq i,j,k\leq 3.
\]
\subsection{Gaussian beams}
With $\varrho$ as a large parameter, the Gaussian beam solutions have the asymptotic form as
\[
u_\varrho=\mathfrak{a}e^{\mathrm{i}\varrho\varphi},
\]
with 
\[
\varphi=\sum_{k=0}^N\varphi_k(\tau,z'),\quad \mathfrak{a}(\tau,z')=\chi\left(\frac{|z'|}{\delta}\right)\sum_{k=0}^{N+1}\varrho^{-k}\mathbf{a}_k(\tau,z'),\quad \mathbf{a}_k(\tau,z')=\sum_{j=0}^N\mathbf{a}_{k,j}(\tau,z'),
\]
which is compactly supported in a neighborhood of $\vartheta$,
\[
\mathcal{V}=\{(\tau,z')\in M'\vert \tau\in \left[\tau_--\frac{\epsilon}{\sqrt{2}},\tau_++\frac{\epsilon}{\sqrt{2}}\right],\,|z'|<\delta\}.
\]
Here $\delta>0$ is a small parameter. For each $j$, $\varphi_j$ and $\mathbf{a}_{k,j}$ are complex valued homogeneous polynomials of degree $j$ with respect to the variables $z^i$, $i=1,2,3$. The smooth function $\chi:\mathbb{R}\rightarrow [0,+\infty)$ satisfies $\chi(t)=1$ for $|t|\leq\frac{1}{4}$ and $\chi(t)=0$ for $|t|\geq \frac{1}{2}$. We refer to \cite{feizmohammadi2019recovery} for more details. Under the coordinates $(y^1,y^2,y^3)$ on $\Omega'$, we denote the (co)vector $\mathbf{a}_k=(a_{k1},a_{k2},a_{k3})$. The parameter $\delta$ is small enough such that $\mathfrak{a}\vert_{t=0}=\mathfrak{a}\vert_{t=T}=0$.\\

In the following calculations, we work under the coordinate system $(y^1,y^2,y^3)$ on the Riemannian manifold $(\Omega',g_E)$. That is, all the inner products and covariant derivatives are with respect to the Euclidean metric $g_E$. For simplicity of notations, we denote $\overline{g}=\overline{g}_\bullet$, $g=g_\bullet$ and $c=c_\bullet$, $\bullet=S,P$.
In a neighborhood of $\vartheta$, we can write (cf. \cite{uhlmann2021inverse})
\begin{equation}\label{eq_phase}
\rho\partial_t^2 u_\varrho-\nabla\cdot S^L(u_\varrho)=e^{\mathrm{i}\varrho\varphi}\left(\varrho^2\mathcal{I}_1+\sum_{k=0}^N\varrho^{1-k}\mathcal{I}_{k+2}+\mathcal{O}(\varrho^{-N})\right),
\end{equation}
where
\[
\mathcal{I}_1=-\rho(\partial_t\varphi)^2\mathbf{a}_0+(\lambda+\mu)\langle\mathbf{a}_0,\nabla\varphi\rangle\nabla\varphi+\mu|\nabla\varphi|^2\mathbf{a}_0,
\]
and
\[
\begin{split}
(\mathcal{I}_2)_i=&\rho(\partial^2_t\varphi)a_{0i}+2\rho\partial_t\varphi\partial_ta_{0i}+\mathrm{i}\rho(\partial_t\varphi)^2a_{1i}\\
&- c^{-2}\left(\partial_i(\lambda a_{0k}\varphi_{;\ell}g^{k\ell})+\lambda a_{0k}\varphi_{;\ell}g^{k\ell}\partial_mg_{ij}g^{jm}+\partial_m(\mu a_{0i}\varphi_{;j}+\mu a_{0j}\varphi_{;i})g^{jm}\right)\\
&-c^{-2}\left(\lambda a_{0k;\ell}g^{k\ell}\varphi_{;i}+\mu(a_{0i;j}+a_{j;i})\varphi_{;m}g^{jm}+\mathrm{i}\lambda a_{1k}\varphi_{;\ell}g^{k\ell}\varphi_{;i}+\mathrm{i}\mu(a_{1i}\varphi_{;j}\varphi_{;m}g^{jm}+\varphi_{;i}a_{1j}\varphi_{;m}g^{jm})\right)\\
&+\Gamma^n_{im}g^{mj}c^{-2}(\lambda a_{0k}\varphi_{;\ell}g^{k\ell}g_{nj}+\mu a_{0n}\varphi_{;j}+\mu a_{0j}\varphi_{;n})\\
&+\Gamma^n_{jm}g^{mj}c^{-2}(\lambda a_{0k}\varphi_{;\ell}g^{k\ell}g_{ni}+\mu a_{0n}\varphi_{;i}+\mu a_{0i}\varphi_{;n}).
\end{split}
\]

We will need to construct the phase function $\varphi$ and the amplitude $\mathfrak{a}$ such that
\begin{equation}\label{I1}
\frac{\partial^\Theta}{\partial z^\Theta}\mathcal{I}_k=0\text{ on } \vartheta
\end{equation}
for $\Theta=(0,\Theta_1,\Theta_2,\Theta_3)$ with $|\Theta|\leq N$ and $k=1,2,\cdots, N+2$.

\subsection{\textit{S}-waves}

For the construction of \textit{S}-waves, we note that $g=g_S$.
In order for $\mathcal{I}_1$ to vanish up to order $N$ on $\vartheta$ (cf. \eqref{I1} with $k=1$), we take $\varphi$ and $\mathbf{a}_0$ such that
\begin{equation}\label{eq_phase}
\frac{\partial^\Theta}{\partial y^\Theta}(\mathcal{S}\varphi)=0\text{ on } \vartheta,
\end{equation}
where $\mathcal{S}\varphi=\mu|\nabla\varphi|^2-\rho(\partial_t\varphi)^2$, and
\begin{equation}\label{eq_polarization}
\frac{\partial^\Theta}{\partial y^\Theta}\langle\mathbf{a}_0,\nabla\varphi\rangle=0\text{ on } \vartheta,
\end{equation}
for any $|\Theta|\leq N$. Taking $\Theta=0$ in \eqref{eq_polarization}, we conclude that $a_{01}\vert_{\vartheta}=0$.

For the construction of the phase function $\varphi$, we can take
\[
\varphi=\sum_{k=0}^N\varphi_k(\tau,z')
\]
such that
\[
\varphi_0(\tau,r,z'')=0,\quad \varphi_1(\tau,r,z'')=r,\quad \varphi_2(\tau,z')=\sum_{i,j=1}^3H_{ij}(\tau)z^iz^j,
\]
where $H$ is a symmetric matrix solving the Riccati equation
\begin{equation}\label{Ricatti}
\frac{\mathrm{d}}{\mathrm{d}\tau}H+HCH+D=0,\tau\in \left(\tau_--\frac{\epsilon}{2},\tau_++\frac{\epsilon}{2}\right),\quad H(\tau_0)=H_0,\text{ with }\Im H_0>0,
\end{equation}
where $C$, $D$ are matrices with $C_{11}=0$, $C_{ii}=2$, $i=2,3$, $C_{ij}=0$, $i\neq j$, $D_{ij}=\frac{1}{4}(\partial_{ij}^2\overline{g}^{11})$, $H_0$ is any given symmetric matrix with $\Im H_0>0$. Here $(\overline{g}^{ij})$ is the inverse of the Lorentzian metric $(\overline{g}_{ij})$ under the Fermi coordinates $(z^0,z^1,z^2,z^3)$, i.e., $\overline{g}_{ij}=\overline{g}(\frac{\partial}{\partial z^i},\frac{\partial}{\partial z^j})$. Then the equation \eqref{Ricatti} has a unique solution with $\Im(H(\tau))>0$ for all $\tau$.

For solving the Ricatti equation \eqref{Ricatti}, we take
\[
H(\tau)=Z(\tau)Y(\tau)^{-1},
\]
where $Z(\tau)$ and $Y(\tau)$ are solutions to the first order linear ODEs
\[
\begin{split}
\frac{\mathrm{d}}{\mathrm{d}\tau}Y=CZ,\quad Y(\tau_0)=Y_0,\\
\frac{\mathrm{d}}{\mathrm{d}\tau}Z=-DY,\quad Z(\tau_0)=H_0Y_0,
\end{split}
\]
where $Y_0$ is any non-degenerate matrix.
Here $Y(\tau)$ is non-degnerate for all $\tau$. Moreover, the following identity holds
\[
\det(\Im H(\tau))|\det(Y(\tau))|^2=c_0,
\]
where $c_0$ is a constant independent of $\tau$. For more discussions on the Ricatti equation, we refer to \cite{kachalov2001inverse}.
The higher order terms $\varphi_k$, $k=3,\cdots, N,$ can be constructed so that \eqref{eq_phase} is satisfied (cf. \cite{feizmohammadi2019timedependent,feizmohammadi2019recovery}).\\

Next consider the equation $(\mathcal{I}_2)_{i}=0$ for $i=2,3$. We have, for $\alpha=2,3$,
\[
\begin{split}
\left(\rho\partial^2_t\varphi-c_S^{-2}\mu\partial_s\varphi-c_S^{-2}\mu\sum_{\beta=2}^3\frac{\partial^2\varphi}{\partial y^\beta\partial y^\beta}\right)a_{0\alpha}\\
-\sqrt{2}\left(\rho\partial_ta_{0\alpha}+c_S^{-2}\mu\frac{\partial_sa_{0\alpha}}{\partial s}
\right)&\\
-\frac{1}{\sqrt{2}}c_S^{-2}\partial_s\mu a_{0\alpha}+\frac{1}{\sqrt{2}}\mu c_S^{-3}\frac{\partial c_S}{\partial s}a_{0\alpha}+\frac{1}{2}\mathrm{i}a_{1\alpha}(\rho-c_S^{-2}\mu)&\\
-c_S^{-2}(\lambda+\mu)\left(\frac{1}{\sqrt{2}}\frac{\partial a_{01}}{\partial y^\alpha}+\sum_{\beta=2}^3a_{0\beta}\frac{\partial^2\varphi}{\partial y^\alpha\partial y^\beta}\right)&=0.
\end{split}
\]
Notice that the above equation can be rewritten as
\begin{equation}\label{eq_a0}
\mathcal{T}a_{0\alpha}+\frac{1}{2}\mathrm{i}a_{1\alpha}(\rho-c_S^{-2}\mu)
-c_S^{-2}(\lambda+\mu)\left(\frac{1}{\sqrt{2}}\frac{\partial a_{01}}{\partial y^\alpha}+\sum_{\beta=2}^3a_{0\beta}\frac{\partial^2\varphi}{\partial y^\alpha\partial y^\beta}\right)=0,
\end{equation}
where
\[
\mathcal{T}=2\frac{\partial}{\partial \tau}+\left[\frac{1}{\mu}\frac{\partial\mu}{\partial\tau}-c_S^{-1}\frac{\partial c_S}{\partial \tau}+\frac{1}{\det(Y_S)}\frac{\partial \det(Y_S)}{\partial\tau}\right].
\]

Consider the equation \eqref{eq_polarization} with $|\Theta|=1$, we have
\begin{equation}\label{eq_parallel_a}
\frac{1}{\sqrt{2}}\frac{\partial a_{01}}{\partial y^k}+\sum_{\beta=2}^3a_{0\beta}\frac{\partial^2\varphi}{\partial y^k\partial y^\beta}=0,
\end{equation}
for $k=1,2,3$. Insert \eqref{eq_parallel_a} (with $k=2,3$) into \eqref{eq_a0}, together with the fact $\rho=c_S^{-2}\mu$, we have
the following transport equation for $a_{0\alpha}$, $\alpha=2,3$,
\[
\mathcal{T}a_{0\alpha}=0.
\]
By solving the above transport equation, one can determine $a_{0\alpha}\vert_{\vartheta}$ for $\alpha=2,3$. In particular we can take
\[
a_{0\alpha}(\tau)=c_\alpha\det(Y_S(\tau))^{-1/2}c_S(\tau,0)^{-1/2}\rho(\tau,0)^{-1/2},
\]
where the constant $c_\alpha$ depends on the initial value of $a_{0\alpha}$. Using the equation \eqref{eq_parallel_a} again, we can now determine $\frac{\partial^\Theta a_{01}}{\partial y^\Theta}\Big\vert_{\vartheta}$ for $|\Theta|=1$. 

Noting that we have determined $\mathbf{a}_0$ and $\partial a_{01}$ on $\vartheta$ and using the equation $(\mathcal{I}_2)_{i=1}=0$, we end up with
\[
\begin{split}
\mathrm{i}\rho(\partial_t\varphi)^2a_{11}-\mathrm{i}c_S^{-2}(\lambda a_{1k}\varphi_{;\ell}g^{k\ell}\varphi_{;1}+\mu a_{11}\varphi_{;j}\varphi_{;m}g^{jm}+\varphi_{;1}a_{1j}\varphi_{;m}g^{jm})&\\
+2\rho\partial_t\varphi\partial_ta_{01}-c_S^{-2}(\lambda\partial_1a_{0k}\varphi_{;\ell}g^{k\ell}+(\mu\partial_ma_{01}\varphi_{;j}+\mu\partial_ma_{0j}\varphi_{;1})g^{jm})&\\
-c_S^{-2}(\lambda\partial_\ell a_{0k}g^{k\ell}\varphi_{;1}+\mu(\partial_ja_{01}+\partial_1a_{0j})\varphi_{;m}g^{jm})
&=F(\mathbf{a}_0),
\end{split}
\]
which further simplifies to
\begin{equation}\label{eq_ba}
\mathrm{i}(\rho-c_S^{-2}(\lambda+2\mu))a_{11}-\sqrt{2}c_S^{-2}(\lambda+\mu)(\partial_2a_{02}+\partial_3a_{03})=F(\mathbf{a}_0,\partial a_{01}).
\end{equation}
Notice that $\rho-c_S^{-2}(\lambda+2\mu)\neq 0$.
We will also use equation \eqref{eq_polarization} with $|\Theta|=2$, which can be written as
\begin{equation}\label{eq_polarization2}
\frac{1}{\sqrt{2}}\frac{\partial^2 a_{01}}{\partial y^m\partial y^k}+\frac{\partial a_{0j}}{\partial y^k}\frac{\partial^2\varphi}{\partial y^m\partial y^j}+\frac{\partial a_{0i}}{\partial y^m}\frac{\partial^2\varphi}{\partial y^k\partial y^i}=F(\mathbf{a}_0).
\end{equation}
Next use the equation 
\[
\left(\frac{\partial^\Theta}{\partial y^\Theta}\mathcal{I}_2\right)_\alpha=0
\]
for $|\Theta|=1$ and $\alpha=2,3$ and substitute $a_{11}$ and $\partial^2a_{01}$ using \eqref{eq_ba} and \eqref{eq_polarization2},
we end up with a transport equation on $\vartheta$
\begin{equation}\label{transport_derv1}
\left(\frac{\partial}{\partial \tau}+A\right)\left(\begin{array}{c}\frac{\partial a_{02}}{\partial y^1}\\\frac{\partial a_{02}}{\partial y^2}\\\frac{\partial a_{02}}{\partial y^3}\\\frac{\partial a_{03}}{\partial y^1}\\\frac{\partial a_{03}}{\partial y^2}\\\frac{\partial a_{03}}{\partial y^3}\end{array}\right)=F(\mathbf{a}_0,\partial a_{01}).
\end{equation}
Here $A$ is a $6\times 6$ matrix depending on $\lambda,\mu,\rho$ and $\varphi$. Solving the above transport equation, one can determine $\frac{\partial^\Theta a_{0\alpha}}{\partial y^\Theta}\vert_{\vartheta}$ for any $|\Theta|=1$ and $\alpha=2,3$. Using \eqref{eq_ba} and \eqref{eq_polarization2} again, we can determine $\partial^2 a_{01}\vert_{\vartheta}$ and $a_{11}\vert_{\vartheta}$.

Now we have already determined $a_{01},\partial a_{01}, \partial^2 a_{01},a_{0\alpha},\partial a_{0\alpha}, a_{11}$ on $\vartheta$, $\alpha=2,3$.
Then we can use the equations
\[
(\mathcal{I}_3)_{i=2,3}=0,\quad (\partial\mathcal{I}_2)_{i=1}=0,\quad \partial^3\mathcal{I}_1=0,\quad (\partial^2\mathcal{I}_2)_{i=2,3}=0
\]
to determine $\partial^3a_{01},\partial^2a_{0\alpha},\partial a_{11},a_{1\alpha}$ for $\alpha=2,3$.

Continuing with the process, we can have \eqref{I1} satisfied, and finish our construction of Gaussian beam solutions for \textit{S}-waves.

\subsection{\textit{P}-waves}\label{Pgaussian}
For the construction of \textit{P}-waves, we take $\varphi$ such that
\[
\frac{\partial^\Theta}{\partial y^\Theta}(\mathcal{S}\varphi)=0\text{ on } \vartheta,
\]
for any $|\Theta|\leq N$, where $\mathcal{S}\varphi=(\lambda+2\mu)|\nabla\varphi|^2-\rho(\partial_t\varphi)^2$. The phase function $\varphi$ can be constructed similarly as for the \textit{S}-waves. Now we denote $g=g_S$.

Take
\[
\mathbf{a}_0=A_P\nabla\varphi.
\]
Then the equation \eqref{I1} is satisfied for $k=1$ since $\mathcal{I}_1=(\mathcal{S}\varphi)A_P\nabla\varphi$.

First consider the equation $(\mathcal{I}_2)_{i}=0$ for $i=1$. We obtain
\[
\begin{split}
\frac{1}{\sqrt{2}}\left(\rho \partial^2_t\varphi-c_P^{-2}(\lambda+2\mu)\partial_s^2\varphi-c_P^{-2}(\lambda+2\mu)\sum_{\alpha=2}^3\frac{\partial^2\varphi}{\partial y^\alpha\partial y^\alpha}\right)A_P&\\
-\left(\rho\partial_t A_P+c_P^{-2}(\lambda+2\mu)\partial_sA_P\right)&\\
+\frac{1}{2}(\lambda+2\mu)c_P^{-3}\frac{\partial c_P}{\partial s}A_P-\frac{1}{2}c_P^{-2}\partial_s(\lambda+2\mu)A_P
+\mathrm{i}\frac{1}{2}b_1[\rho-c_P^{-2}(\lambda+2\mu)]&\\
-\sqrt{2}\left(c_P^{-2}(\lambda+2\mu)\frac{\partial^2\varphi}{\partial s^2}+\rho\frac{\partial^2\varphi}{\partial s\partial t }\right)A_P&=0.
\end{split}
\] 
We have proved in \cite{uhlmann2021inverse} that
\[
c_P^{-2}(\lambda+2\mu)\frac{\partial^2\varphi}{\partial s^2}+\rho\frac{\partial^2\varphi}{\partial s\partial t }=\sqrt{2}\rho\frac{\partial}{\partial\tau}(\partial_s\varphi)=0,\quad \text{on }\vartheta,
\]
since $\partial_s\varphi=\frac{1}{\sqrt{2}}$ is constant along $\vartheta$.
Using the fact that $\rho-c_P^{-2}(\lambda+2\mu)=0$ , the above equation can be rewritten as a transport equation
\[
\mathcal{T}A_P=0
\]
where
\[
\mathcal{T}=2\frac{\partial}{\partial \tau}+\left[\frac{1}{\lambda+2\mu}\frac{\partial(\lambda+2\mu)}{\partial\tau}-c_P^{-1}\frac{\partial c_P}{\partial \tau}+\frac{1}{\det(Y_P)}\frac{\partial \det(Y_P)}{\partial\tau}\right].
\]
Then $A_P\vert_{\vartheta}$ can be determined by solving this transport equation. In particular, we can take
\[
A_P(\tau)=c\det(Y_P(\tau))^{-1/2}c_P(\tau,0)^{-1/2}\rho(\tau,0)^{-1/2},
\]
where the constant $c$ depends on the initial value of $A_P$.

Using the equation $(\mathcal{I}_2)_i=0$ with $i=2,3$, we end up with
\begin{equation}\label{eq_ba2}
\mathrm{i}(\rho-c_P^{-2}\mu)a_{1\alpha}-c_P^{-2}(\lambda+\mu)\frac{\partial A_P}{\partial y^\alpha}=F(A_P),
\end{equation}
for $\alpha=2,3$.
Notice that $\rho-c_P^{-2}\mu\neq 0$. Next use the equation 
\[
\left(\frac{\partial^\Theta}{\partial y^\Theta}\mathcal{I}_2\right)_\alpha=0
\]
for $|\Theta|=1$ and $\alpha=1$ and substitute $a_{1\alpha}$, $\alpha=2,3$ using \eqref{eq_ba2}, we end up with a transport equation
\begin{equation}\label{transport_derv2}
\left(\frac{\partial}{\partial \tau}+A\right)\left(\begin{array}{c}\frac{\partial A_P}{\partial y^1}\\\frac{\partial A_P}{\partial y^2}\\\frac{\partial A_P}{\partial y^3}\end{array}\right)=F(A_P).
\end{equation}
Solving the above transport equation, one can determine $\frac{\partial^\Theta A_P}{\partial y^\Theta}\vert_{\vartheta}$ for any $|\Theta|=1$. Using \eqref{eq_ba2} again, we can determine $a_{1\alpha}\vert_\vartheta$ for $\alpha=2,3$.

Similar as the construction for \textit{S}-waves above, we can finish the construction of the \textit{P}-wave Gaussian beam solutions such that \eqref{I1} is satisfied.
\section{Gaussian beams with reflections}\label{gaussianreflection}
We will discuss the reflection of Gaussian beams at the boundary in this section. The reflection condition we considered here is the traction-free boundary condition, which is natural in practice. The reflection of (real) geometric optics solutions is analyzed in \cite{stefanov2021transmission}.  We only give detailed characterization for the case where the boundary is locally flat, and non-flat case will not require much more work by flattening it \cite{stefanov2021transmission}. Assume near a fixed point $x_0$, $\partial\Omega$ is locally expressed as $x_3=0$ and $\Omega$ is represented as $\{x_3<0\}$. We are looking for $u_\varrho$ as a sum of two solutions $u_\varrho=u_\varrho^++u_\varrho^-$, where $u_\varrho^+$ is the incident wave and $u_\varrho^-$ is the reflected wave generated by $u_\varrho^+$ hitting the boundary. We remark here that the reflected wave $u_\varrho^-$ can undergo further reflections, and after multiple reflections the behavior of the waves would become very complicated, but we only need to discuss single reflection.
\subsection{\textit{P}- incident waves}
We first consider the case when the incident wave is a \textit{P}-wave. The reflected wave is a combination of \textit{P}- and \textit{S}- waves.
Assume the incident wave
\[
u^+_\varrho=\sum_{k=0}^{N+1}\varrho^{-k}\mathbf{a}_{P,k}^+\chi^+e^{\mathrm{i}\varrho\varphi_P^+}
\]
is a Gaussian beam traveling along the null-geodesic $\vartheta^+$ as constructed as in Section \ref{Pgaussian}, and $\vartheta^+$ intersects with the boundary at the point $p\in(0,T)\times\partial\Omega$.
We seek for reflected waves as
\begin{equation}\label{reflectedwaves}
u^-_\varrho=u^-_{P,\varrho}+u^-_{S,\varrho}:=\sum_{k=0}^{N+1}\varrho^{-k}\mathbf{a}_{P,k}^-\chi^-_Pe^{\mathrm{i}\varrho\varphi_P^-}+\sum_{k=0}^{N+1}\varrho^{-k}\mathbf{a}_{S,k}^-\chi^-_Se^{\mathrm{i}\varrho\varphi_S^-},
\end{equation}
where $u^-_{P,\varrho},u^-_{S,\varrho}$ are also Gaussian beam solutions representing \textit{P}- and \textit{S}- waves respectively.
Here $\mathbf{a}^\pm_{\bullet,k}=(a^\pm_{\bullet,k1},a^\pm_{\bullet,k2},a^\pm_{\bullet,k3})$. The reflected wave $u^-_\varrho$ is generated such that $u_\varrho=u^+_\varrho+u^-_\varrho$ satisfies the Neumann boundary condition 
\[
\mathcal{N}_{\lambda,\mu}u_\varrho:=\nu\cdot S^L(x,u_\varrho)\vert_{\mathbb{R}\times\partial\Omega}=0,
\]
 up to order $N$, at the point $p$. \\

Assume $\nabla \varphi_P^+(p)=\xi^+_P=(\xi^+_{P,1},\xi^+_{P,2},\xi^+_{P,3})$, $\nabla \varphi_P^-(p)=\xi^-_P=(\xi^-_{P,1},\xi^-_{P,2},\xi^-_{P,3})$, $\nabla \varphi_S^-(p)=\xi^-_S=(\xi^-_{S,1},\xi^-_{S,2},\xi^-_{S,3})$, where $|\xi^+_P|=|\xi^-_P|=\frac{1}{c_P(p)}$ and $|\xi^-_S|=\frac{1}{c_S(p)}$, $\xi^+_{P,3}>0$. By Snell's law, we have
\[
\xi^+_{P,1}=\xi^-_{P,1}=\xi^-_{S,1},\quad \xi^+_{P,2}=\xi^-_{P,2}=\xi^-_{S,2},\quad \xi^-_{P,3}=-\xi^-_{P,3},\quad \xi^-_{S,3}=-\sqrt{c_S^{-2}-(\xi^+_{P,1})^2-(\xi^+_{P,2})^2}.
\]
To simplify the notation, we denote
\[
\xi^+_P=(\xi_1,\xi_2,\xi_3), \quad \xi^-_P=(\xi_1,\xi_2,-\xi_3),\quad \xi^-_S=(\xi_1,\xi_2,-\xi_{S,3}).
\]

The phase functions $\varphi^-_P$ and $\varphi^-_S$ can be constructed as in previous section such that $
\varphi^-_P\vert_{\mathbb{R}\times\partial\Omega}=\varphi^-_S\vert_{\mathbb{R}\times\partial\Omega}=\varphi^+_P\vert_{\mathbb{R}\times\partial\Omega}
$ at $p$ up to order $N$.
Then note that we have the following asymptotics
\[
\mathcal{N}_{\lambda,\mu}u_\varrho=e^{\mathrm{i}\varrho\varphi}(\varrho\mathcal{N}_0+\mathcal{N}_1+\varrho^{-1}\mathcal{N}_{2}+\cdots),
\]
where $\varphi=\varphi^+_P\vert_{\mathbb{R}\times\partial\Omega}$.
We need
\begin{equation}\label{NeumannCondition}
\partial^\Theta\mathcal{N}_k=0 \text{ at } p,\quad \text{for } k\geq 0,\, |\Theta|\leq N.
\end{equation}

By calculation, we obtain
\[
\begin{split}
\mathcal{N}_0=&\left(\begin{array}{ccc}\mu \partial_3\varphi_P^+&0 & \mu \partial_1\varphi_P^+\\
0 &\mu \partial_3\varphi_P^+ &\mu \partial_2\varphi_P^+\\
\lambda \partial_1\varphi_P^+ &\lambda \partial_2\varphi_P^+&(\lambda+2\mu) \partial_3\varphi_P^+
\end{array}\right)\left(\begin{array}{c}
a_{P,01}^+\\
a_{P,02}^+\\
a_{P,03}^+
\end{array}\right)\\
&+\left(\begin{array}{ccc}\mu \partial_3\varphi_P^-&0 & \mu \partial_1\varphi_P^-\\
0 &\mu \partial_3\varphi_P^- &\mu \partial_2\varphi_P^-\\
\lambda \partial_1\varphi_P^- &\lambda \partial_2\varphi_P^-&(\lambda+2\mu) \partial_3\varphi_P^-
\end{array}\right)\left(\begin{array}{c}
a_{P,01}^-\\
a_{P,02}^-\\
a_{P,03}^-
\end{array}\right)\\
&+\left(\begin{array}{ccc}\mu \partial_3\varphi_{S}^-&0 & \mu \partial_1\varphi_{S}^-\\
0 &\mu \partial_3\varphi_{S}^- &\mu \partial_2\varphi_{S}^-\\
\lambda \partial_1\varphi_{S}^- &\lambda \partial_2\varphi_{S}^-&(\lambda+2\mu) \partial_3\varphi_{S}^-
\end{array}\right)\left(\begin{array}{c}
a_{S,01}^-\\
a_{S,02}^-\\
a_{S,03}^-
\end{array}\right).
\end{split}
\]
Using the fact
\[
\mathbf{a}_{P,0}^-=A_P^-\nabla\varphi_P^-,\quad \mathbf{a}_{P,0}^+=A_P^+\nabla\varphi_P^+,
\]
we have
\begin{equation}\label{eq_sys3}
\begin{split}
\mathcal{N}_0=&\left(\begin{array}{ccc}\mu \partial_3\varphi_P^-&0 & \mu \partial_1\varphi_P^-\\
0 &\mu \partial_3\varphi_P^- &\mu \partial_2\varphi_P^-\\
\lambda \partial_1\varphi_P^- &\lambda \partial_2\varphi_P^-&(\lambda+2\mu) \partial_3\varphi_P^-
\end{array}\right)\left(\begin{array}{c}
\partial_1\varphi_P^-\\
\partial_2\varphi_P^-\\
\partial_3\varphi_P^-
\end{array}\right)A_P^-\\
&+\left(\begin{array}{ccc}\mu \partial_3\varphi_{S}^-&0 & \mu \partial_1\varphi_{S}^-\\
0 &\mu \partial_3\varphi_{S}^- &\mu \partial_2\varphi_{S}^-\\
\lambda \partial_1\varphi_{S}^- &\lambda \partial_2\varphi_{S}^-&(\lambda+2\mu) \partial_3\varphi_{S}^-
\end{array}\right)\left(\begin{array}{c}
a_{S,01}^-\\
a_{S,02}^-\\
a_{S,03}^-
\end{array}\right)\\
&+\left(\begin{array}{ccc}\mu \partial_3\varphi_P^+&0 & \mu \partial_1\varphi_P^+\\
0 &\mu \partial_3\varphi_P^+ &\mu \partial_2\varphi_P^+\\
\lambda \partial_1\varphi_P^+ &\lambda \partial_2\varphi_P^+&(\lambda+2\mu) \partial_3\varphi_P^+
\end{array}\right)\left(\begin{array}{c}
\partial_1\varphi_P^+\\
\partial_2\varphi_P^+\\
\partial_3\varphi_P^+
\end{array}\right)A_P^+.
\end{split}
\end{equation}
Considering also the identity
\begin{equation}\label{eq_orth_a}
\langle\mathbf{a}_{S,0}^-,\nabla\varphi_S^-\rangle=0,
\end{equation}
we have the following equations at point $p$:
\[
\begin{split}
\left(\begin{array}{ccc}-\mu \xi_3&0 & \mu \xi_1\\
0 &-\mu \xi_3 &\mu\xi_2\\
\lambda \xi_1 &\lambda \xi_2&-(\lambda+2\mu) \xi_3
\end{array}\right)\left(\begin{array}{c}
\xi_1\\
\xi_2\\
-\xi_3
\end{array}\right)A_P^-(p)\\
+\left(\begin{array}{ccc}-\mu \xi_{S,3}&0 & \mu \xi_1\\
0 &-\mu \xi_{S,3} &\mu \xi_2\\
\lambda \xi_1 &\lambda \xi_2&-(\lambda+2\mu) \xi_{S,3}
\end{array}\right)\left(\begin{array}{c}
a_{S,01}^-(p)\\
a_{S,02}^-(p)\\
a_{S,03}^-(p)
\end{array}\right)\\
=-\left(\begin{array}{ccc}\mu \xi_3&0 & \mu \xi_1\\
0 &\mu \xi_3 &\mu \xi_2\\
\lambda \xi_1 &\lambda \xi_2&(\lambda+2\mu) \xi_3
\end{array}\right)\left(\begin{array}{c}
\xi_1\\
\xi_2\\
\xi_3
\end{array}\right)A_{P}^+(p).
\end{split}
\]
and
\[
(\xi_1,\xi_2,-\xi_{S,3})\left(\begin{array}{c}
a_{S,01}^-(p)\\
a_{S,02}^-(p)\\
a_{S,03}^-(p)
\end{array}\right)=0.
\]
The equations above can be formulated into the following linear system for $(A_P^-,a_{S,01}^-,a_{S,02}^-,a_{S,03}^-)$ at $p$:
\begin{equation}\label{linsys4M}
\begin{split}
M_P(\xi)\left(\begin{array}{c}
A_P^-(p)\\
a_{S,01}^-(p)\\
a_{S,02}^-(p)\\
a_{S,03}^-(p)
\end{array}\right):=&\left(
\begin{array}{cccc}
-2\mu\xi_1\xi_3 & -\mu\xi_{S,3} &0 & \mu\xi_1\\
-2\mu\xi_2\xi_3 & 0 & -\mu\xi_{S,3} & \mu\xi_2\\
\rho-2\mu(\xi_1^2+\xi_2^2) &\lambda\xi_1 &\lambda\xi_2 & -(\lambda+2\mu)\xi_{S,3}\\
0 & \xi_1 &\xi_2 &-\xi_{S,3}
\end{array}
\right)
\left(\begin{array}{c}
A_P^-(p)\\
a_{S,01}^-(p)\\
a_{S,02}^-(p)\\
a_{S,03}^-(p)
\end{array}\right)\\
=&-\left(\begin{array}{c}
-2\mu\xi_1\xi_3A_{P}^+(p)\\
-2\mu\xi_2\xi_3A_{P}^+(p)\\
\rho-2\mu(\xi_1^2+\xi_2^2)A_{P}^+(p)\\
0
\end{array}\right).
\end{split}
\end{equation}
\begin{lemma}\label{lemma1}
The matrix $M_P(\xi)$ is invertible.
\end{lemma}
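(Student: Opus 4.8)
The plan is to show that $\det M_P(\xi) \neq 0$ by direct computation, exploiting the structure of the matrix and the Snell's law relations $|\xi^-_P|^2 = \xi_1^2+\xi_2^2+\xi_3^2 = c_P^{-2}$ and $|\xi^-_S|^2 = \xi_1^2 + \xi_2^2 + \xi_{S,3}^2 = c_S^{-2}$, together with the ellipticity inequalities $\mu>0$, $3\lambda+2\mu>0$ (hence $\lambda+2\mu>0$). First I would simplify by using the rotational symmetry in the tangential variables: after an orthogonal change of the $(x_1,x_2)$-coordinates we may assume $\xi_2 = 0$ and write $\xi_1 = \xi_T \geq 0$ (the tangential slowness), so that the $4\times4$ system decouples — the second row becomes $-\mu\xi_{S,3}\, a_{S,02}^- = 0$, which together with $\xi_{S,3}\neq 0$ (a consequence of $c_S > c_P$, so $\xi_T^2 \le c_P^{-2} < c_S^{-2}$ forces $\xi_{S,3}^2 = c_S^{-2}-\xi_T^2 >0$) immediately gives $a_{S,02}^- = 0$. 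What remains is a $3\times3$ system in $(A_P^-, a_{S,01}^-, a_{S,03}^-)$ with matrix
\[
\widetilde M = \begin{pmatrix}
-2\mu\xi_T\xi_3 & -\mu\xi_{S,3} & \mu\xi_T\\
\rho - 2\mu\xi_T^2 & \lambda\xi_T & -(\lambda+2\mu)\xi_{S,3}\\
0 & \xi_T & -\xi_{S,3}
\end{pmatrix},
\]
and it suffices to show $\det\widetilde M \neq 0$.

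Next I would expand $\det \widetilde M$ along the last row and simplify using $\rho = (\lambda+2\mu)c_P^{-2} = (\lambda+2\mu)(\xi_T^2+\xi_3^2)$ and $\mu = \rho c_S^2$, $\rho c_S^{-2}\mu^{-1}$ type substitutions; the goal is to reduce the determinant to a multiple of the classical Rayleigh-type determinant that governs $P$-to-$S$ reflection. After collecting terms one expects an expression of the form
\[
\det\widetilde M = c\,\mu\,\bigl[(\rho - 2\mu\xi_T^2)^2 + 4\mu^2\xi_T^2\,\xi_3\,\xi_{S,3}\bigr]
\]
up to a nonzero scalar factor (this is the standard combination appearing in elastic reflection coefficients; the precise constant $c$ and possibly a factor of $\xi_{S,3}$ or $\xi_3$ I would pin down in the computation). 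Since $\mu>0$, $\xi_3>0$ and $\xi_{S,3}>0$, both bracketed terms are nonnegative and their sum vanishes only if $\rho - 2\mu\xi_T^2 = 0$ \emph{and} $\xi_T = 0$ simultaneously, i.e. $\rho = 0$ — impossible. Hence $\det\widetilde M > 0$, and reversing the orthogonal reduction, $\det M_P(\xi)\neq 0$.

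The main obstacle I anticipate is purely bookkeeping: carrying the $\lambda$, $\mu$, $\rho$, $\xi_3$, $\xi_{S,3}$ dependencies through the determinant expansion without sign errors, and recognizing the resulting polynomial as a manifestly signed quantity rather than leaving it as an opaque mess. In particular one must be careful that the term linear in $\lambda$ (coming from the $\lambda\xi_T$ entry) cancels or combines correctly — the Rayleigh combination should end up depending on $\lambda$ only through $\rho$ and $\mu$, reflecting the fact that $P$-$S$ reflection coefficients are expressible in wavespeeds alone. If the clean factorization above does not materialize, the fallback is to argue by contradiction: a nontrivial kernel vector $(A_P^-,a_{S,01}^-,0,a_{S,03}^-)$ would produce a nonzero reflected displacement field satisfying the traction-free condition with zero incoming data, which contradicts uniqueness/energy conservation for the reflection problem — but I expect the direct determinant computation to succeed and to be cleaner.
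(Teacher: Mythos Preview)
Your proposal is correct and follows essentially the same route as the paper: rotate so that $\xi_2=0$, peel off the decoupled $SH$-component (the paper does this via an explicit $SH/SV$ polarization basis, you do it by observing that the second row isolates $a_{S,02}^-$), and then check positivity of the remaining minor, which in both cases reduces to $\mu^2\bigl[(\xi_1^2-\xi_{S,3}^2)^2+4\xi_1^2\xi_3\xi_{S,3}\bigr]$ --- exactly your Rayleigh combination $(\rho-2\mu\xi_T^2)^2+4\mu^2\xi_T^2\xi_3\xi_{S,3}$ after using $\rho=\mu(\xi_1^2+\xi_{S,3}^2)$. One small slip: you wrote ``$c_S>c_P$'' where you meant $c_P>c_S$; the inequality $c_P^{-2}<c_S^{-2}$ you actually use is the right one.
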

\begin{proof}
To see that the matrix $M_P(\xi)$ is invertible, without loss of generality, we assume $\xi_2=0$. Consider the homogeneous equation
\begin{equation}\label{MatrixM}
M(\xi)\left(\begin{array}{c}
A_P\\
a_{S,1}\\
a_{S,2}\\
a_{S,3}
\end{array}\right)=0.
\end{equation}
By the last equation, we can assume that
\[
\left(\begin{array}{c}
a_{S,1}\\
a_{S,2}\\
a_{S,3}
\end{array}\right)=A_{SH}\mathbf{e}_{SH}+A_{SV}\mathbf{e}_{SV}
\]
where
\[
\mathbf{e}_{SH}=\left(\begin{array}{c}
0\\
-c_S^{-1}\\
0
\end{array}\right)
\]
\[
\mathbf{e}_{SV}
=\left(\begin{array}{c}
\xi_{S,3}\\
0\\
\xi_1
\end{array}\right).
\]
The homogeneous equation simplifies into
\[
M'_{P}(\xi)(A_P,A_{SV},A_{SH})^T=0,
\]
where
\[
M_{P}'(\xi)=\left(
\begin{array}{ccc}
-2\mu\xi_1\xi_3 &-\rho+2\mu\xi_1^2&0\\
0 &0 &-\mu c_S^{-1}\xi_{S,3}\\
\rho-2\mu\xi_1^2&-2\mu\xi_1\xi_{S,3}& 0
\end{array}
\right).
\]
One immediately obtain that $A_{SH}=0$. Then we only need to consider the matrix
\[
\left(
\begin{array}{cc}
-2\mu\xi_1\xi_3 &\mu(\xi_1^2-\xi_{S,3}^2)\\
-\mu(\xi_1^2-\xi_{S,3}^2)&-2\mu\xi_1\xi_{S,3}
\end{array}
\right),
\]
whose determinant is
\[
\mu^2(4\xi_1^2\xi_3\xi_{S,3}+\xi_1^4-2\xi_1^2\xi_{S,3}^2+\xi_{S,3}^4)>0.
\]
This is because that $\xi_1^2\xi_3\xi_{S,3}<\xi_1^2\xi_{S,3}^2$, which in turn yields
\[
0<4\xi_1^2\xi_3\xi_{S,3}-2\xi_1^2\xi_{S,3}^2<2\xi_1^2\xi_{S,3}^2\leq \xi_1^4+\xi_{S,3}^4.
\]
This shows that the equation \eqref{MatrixM} has has only zero solutions, and therefore $M(\xi)$ is invertible.
\end{proof}
By the above lemma, we can solve the linear system \eqref{linsys4M} to determine $(A_P^-,a_{S,01}^-,a_{S,02}^-,a_{S,03}^-)$ at the point $p$.

Next, we determine the tangential derivatives of $(A_P^-,a_{S,01}^-,a_{S,02}^-,a_{S,03}^-)$ at $p$. Use the fact that $\mathcal{N}_0=0$ needs to be satisfied up to order $N$ at $p$, that is $\partial_{t,x}^\alpha\mathcal{N}_1(p)=0$, for $\alpha=(\alpha_0,\alpha_1,\alpha_2,0)$, $|\alpha|\leq N$. Let us first consider, for example, $(\partial_1A_P^-,\partial_1a_{S,01}^-,\partial_1a_{S,02}^-,\partial_1a_{S,03}^-)$. Taking the first order derivative in $x_1$ of \eqref{eq_sys3} and \eqref{eq_orth_a}, we obtain a linear system
\[
M(\xi)\left(\begin{array}{c}\partial_1A_P^-(p)\\\partial_1a_{S,01}^-(p)\\\partial_1a_{S,02}^-(p)\\\partial_1a_{S,03}^-(p)\end{array}\right)=F(A_P^-(p),\mathbf{a}_{S,0}^-(p)).
\]
Here and below we suppress the dependence of $F$ on $\mathbf{a}^+_{P,k}$ and $\varphi^+_P,\varphi^-_P,\varphi^-_S$ (as well as their derivates).
Because the matrix $M(\xi)$ is invertible, we can determine $(\partial_1A_P^-,\partial_1a_{S,01}^-,\partial_1a_{S,02}^-,\partial_1a_{S,03}^-)$ at the point $p$. Continue with this process, one can determine $(\partial^\alpha A_P^-,\partial^\alpha a_{S,01}^-,\partial^\alpha a_{S,02}^-,\partial^\alpha a_{S,03}^-)(p)$ for any $\alpha=(\alpha_0,\alpha_1,\alpha_2,0)$, $|\alpha|\leq N$.\\

Next, consider the determination of $\mathbf{a}_{P,1}^-$ and $\mathbf{a}_{S,1}^-$ at the point $p$. Notice that
\begin{equation}
\begin{split}
\mathcal{N}_1=&\left(\begin{array}{ccc}\mu \partial_3\varphi_P^-&0 & \mu \partial_1\varphi_P^-\\
0 &\mu \partial_3\varphi_P^- &\mu \partial_2\varphi_P^-\\
\lambda \partial_1\varphi_P^- &\lambda \partial_2\varphi_P^-&(\lambda+2\mu) \partial_3\varphi_P^-
\end{array}\right)\left(\begin{array}{c}
a_{P,11}^-\\
a_{P,12}^-\\
a_{P,13}^-
\end{array}\right)\\
&+\left(\begin{array}{ccc}\mu \partial_3\varphi_{S}^-&0 & \mu \partial_1\varphi_{S}^-\\
0 &\mu \partial_3\varphi_{S}^- &\mu \partial_2\varphi_{S}^-\\
\lambda \partial_1\varphi_{S}^- &\lambda \partial_2\varphi_{S}^-&(\lambda+2\mu) \partial_3\varphi_{S}^-
\end{array}\right)\left(\begin{array}{c}
a_{S,11}^-\\
a_{S,12}^-\\
a_{S,13}^-
\end{array}\right)\\
&+F(\mathbf{a}^-_{P,0},\mathbf{a}^-_{S,0},\partial\mathbf{a}^-_{P,0},\partial\mathbf{a}^-_{S,0})
\end{split}
\end{equation}
Note that the full first-order derivatives of $\mathbf{a}^-_{P,0},\mathbf{a}^-_{S,0}$ at point $p$, $\partial\mathbf{a}^-_{P,0}(p),\partial\mathbf{a}^-_{S,0}(p)$ (not only the derivatives in $(t,x_1,x_2)$), are determined since the transport equations for $A_P^-$ and $\mathbf{a}^-_{S,0}$ are satisfied.
The equation $\mathcal{N}_1(p)=0$ can be rewritten as 
\begin{equation}\label{linsys_a1}
\begin{split}
\left(\begin{array}{ccc}-\mu \xi_3&0 & \mu \xi_1\\
0 &-\mu \xi_3 &\mu\xi_2\\
\lambda \xi_1 &\lambda \xi_2&-(\lambda+2\mu) \xi_3
\end{array}\right)\left(\begin{array}{c}
a_{P,11}^-(p)\\
a_{P,12}^-(p)\\
a_{P,13}^-(p)
\end{array}\right)\\
+\left(\begin{array}{ccc}-\mu \xi_{S,3}&0 & \mu \xi_1\\
0 &-\mu \xi_{S,3} &\mu \xi_2\\
\lambda \xi_1 &\lambda \xi_2&-(\lambda+2\mu) \xi_{S,3}
\end{array}\right)\left(\begin{array}{c}
a_{S,11}^-(p)\\
a_{S,12}^-(p)\\
a_{S,13}^-(p)
\end{array}\right)
=F,
\end{split}
\end{equation}
where $F$ have already been computed in the previous steps.
Now, we have $3$ equations for the $6$ unknowns. The extra $3$ equations come from equations \eqref{eq_ba} and \eqref{eq_ba2}, which can be rewritten as
\begin{equation}\label{eq_aS1}
(\xi_1,\xi_2,-\xi_{S,3})\cdot (a_{S,11}^-(p),a_{S,12}^-(p),a_{S,13}^-(p))=F_1(\mathbf{a}^-_{P,0,}(p),\partial \mathbf{a}^-_{P,0,}(p),\mathbf{a}^-_{S,0}(p),\partial\mathbf{a}^-_{S,0,}(p)),
\end{equation}
and
\begin{equation}\label{eq_11}
\begin{split}
(\xi_2,-\xi_1,0)\cdot (a_{P,11}^-(p),a_{P,12}^-(p),a_{P,13}^-)(p)=F_2(\mathbf{a}^-_{P,0}(p),\partial \mathbf{a}^-_{P,0}(p),\mathbf{a}^-_{S,0}(p),\partial\mathbf{a}^-_{S,0}(p)),\\
(\xi_1\xi_3,\xi_2\xi_3,\xi_1^2+\xi_2^2)\cdot (a_{P,11}^-(p),a_{P,12}^-(p),a_{P,13}^-(p))=F_3(\mathbf{a}^-_{P,0}(p),\partial \mathbf{a}^-_{P,0}(p),\mathbf{a}^-_{S,0}(p),\partial\mathbf{a}^-_{S,0}(p)).
\end{split}
\end{equation}
Using \eqref{eq_11}, we can reduce the linear system \eqref{linsys_a1} to
\[
\begin{split}
\left(\begin{array}{ccc}-\mu \xi_3&0 & \mu \xi_1\\
0 &-\mu \xi_3 &\mu\xi_2\\
\lambda \xi_1 &\lambda \xi_2&-(\lambda+2\mu) \xi_3
\end{array}\right)\left(\begin{array}{c}
\xi_1\\
\xi_2\\
-\xi_3
\end{array}\right)A_{P,1}^-(p)\\
+\left(\begin{array}{ccc}-\mu \xi_{S,3}&0 & \mu \xi_1\\
0 &-\mu \xi_{S,3} &\mu \xi_2\\
\lambda \xi_1 &\lambda \xi_2&-(\lambda+2\mu) \xi_{S,3}
\end{array}\right)\left(\begin{array}{c}
a_{S,11}^-(p)\\
a_{S,12}^-(p)\\
a_{S,13}^-(p)
\end{array}\right)
=F,
\end{split}
\]
where $A_{P,1}^-(p)=(a_{P,11}^-(p),a_{P,12}^-(p),a_{P,13}^-(p))\cdot(\xi_1,\xi_2,-\xi_3)$. Together with \eqref{eq_aS1}, we now have a system
\[
M_P(\xi)(A_{P,1}^-(p),a_{S,11}^-(p),a_{S,12}^-(p),a_{S,13}^-(p))^T=F.
\]
By Lemma \ref{lemma1}, the matrix $M_P(\xi)$ is invertible.
Then, $a_{P,11}^-,a_{P,12}^-,a_{P,13}^-,a_{S,11}^-,a_{S,12}^-,a_{S,13}^-$ at point $p$ can be determined.

To determine the first order derivative (for example in $x_1$) of $\mathbf{a}_{P,1}^-$ and $\mathbf{a}_{S,1}^-$ at $p$, we take the $\partial_{x_1}$ derivative of \eqref{linsys_a1}, and also use the equations
\[
\begin{split}
(\xi_1,\xi_2,-\xi_{S,3})\cdot (\partial_{x_1}a_{S,11}^-(p),\partial_{x_1}a_{S,12}^-(p),\partial_{x_1}a_{S,13}^-(p))=F_1,\\
(\xi_2,-\xi_1,0)\cdot (\partial_{x_1}a_{P,11}^-(p),\partial_{x_1}a_{P,12}^-(p),\partial_{x_1}a_{P,13}^-(p))=F_2,\\
(\xi_1\xi_3,\xi_2\xi_3,\xi_1^2+\xi_2^2)\cdot (\partial_{x_1}a_{P,11}^-(p),\partial_{x_1}a_{P,12}^-(p),\partial_{x_1}a_{P,13}^-(p))=F_3,
\end{split}
\]
which come from $\partial_{x_1}(\mathcal{I}_2)_i(p)=0$ (more precisely, $i=2,3$ for \textit{P}-wave, and $i=1$ for \textit{S}-wave).
Consequently, we end up with a linear system as
\[
\left(\begin{array}{cccccc}-\mu \xi_3&0 & \mu \xi_1 &-\mu \xi_{S,3}&0 & \mu \xi_1\\
0 &-\mu \xi_3 &\mu\xi_2 &0 &-\mu \xi_{S,3} &\mu \xi_2\\
\lambda \xi_1 &\lambda \xi_2&-(\lambda+2\mu) \xi_3 &\lambda \xi_1 &\lambda \xi_2&-(\lambda+2\mu) \xi_{S,3}\\
0& 0&0& \xi_1 &\xi_2&-\xi_{S,3}\\
\xi_2&-\xi_1& 0 &0 &0 &0\\
\xi_1\xi_3&\xi_2\xi_3&\xi_1^2+\xi_2^2 & 0 & 0 & 0
\end{array}\right)
\left(\begin{array}{c}\partial_1a_{P,11}^-(p)\\
\partial_1a_{P,12}^-(p)\\
\partial_1a_{P,13}^-(p)\\
\partial_1a_{S,11}^-(p)\\
\partial_1a_{S,12}^-(p)\\
\partial_1a_{S,13}^-(p)\end{array}\right)=F.
\]
By similar consideration as above, this linear system is solvable, and thus we can determine $\partial_1a_{P,11}^-,\partial_1a_{P,12}^-,\partial_1a_{P,13}^-,\partial_1a_{S,11}^-,\partial_1a_{S,12}^-,\partial_1a_{S,13}^-$ at $p$.

Continuing with this process, we can determine $\partial^\alpha\mathbf{a}^-_{P,k}(p)$ and $\partial^\alpha\mathbf{a}^-_{S,k}(p)$ for any $k\leq N$ and $|\alpha|\leq N$. Now we finish the construction of the reflected waves.
\subsection{\textit{S}- incident waves}
In this section, we assume that the incident wave is \textit{S}-wave, i.e.,
\[
u^+_\varrho=e^{\mathrm{i}\varrho\varphi_S^+}\chi\left(\sum_{k=0}^{N+1}\varrho^{-k}\mathbf{a}_{S,k}^+\right).
\]
We construct reflected waves given by \eqref{reflectedwaves}.
Denote
\[
\xi^+_S:=\xi=(\xi_1,\xi_2,\xi_3), \quad \xi^-_S=(\xi_1,\xi_2,-\xi_3),\quad \xi^-_P=(\xi_1,\xi_2,-\xi_{P,3}).
\]
where 
\begin{equation}\label{xiP3}
\xi_{P,3}=\sqrt{c_P^{-2}-\xi_1^2-\xi_2^2}.
\end{equation}
Then the phase functions satisfy 
\[
\nabla \varphi_S^+(p)=\xi^+_S, \quad\nabla \varphi_P^-(p)=\xi^-_P, \quad\nabla \varphi_S^-(p)=\xi^-_S.
\] 

First we consider the important case, for which $\xi_1^2+\xi_2^2< c_P^{-2}$, so that $\xi_{P,3}$ is real and there is no evanescent wave. The reflected \textit{P}- and \textit{S}- waves are both progressing waves. The equation $\mathcal{N}_0(p)=0$ would give a system
\[
M_S(\xi)\left(\begin{array}{c}
A_P^-(p)\\
a_{S,01}^-(p)\\
a_{S,02}^-(p)\\
a_{S,03}^-(p)
\end{array}\right)=F,
\]
where
\[
M_S(\xi)=\left(
\begin{array}{cccc}
-2\mu\xi_1\xi_{P,3} & -\mu\xi_{3} &0 & \mu\xi_1\\
-2\mu\xi_2\xi_{P,3} & 0 & -\mu\xi_{3} & \mu\xi_2\\
\rho-2\mu(\xi_1^2+\xi_2^2) &\lambda\xi_1 &\lambda\xi_2 & -(\lambda+2\mu)\xi_{3}\\
0 & \xi_1 &\xi_2 &-\xi_{3}
\end{array}
\right).
\]
The matrix $M_S(\xi)=M_P((\xi_1,\xi_2,\xi_{P,3}))$ is invertible. Therefore, similar as in previous section, we can determine $\partial^\alpha\mathbf{a}^-_{P,k}(p)$ and $\partial^\alpha\mathbf{a}^-_{S,k}(p)$ for any $k\leq N$ and $|\alpha|\leq N$.\\

\noindent\textbf{Evanescent waves.} 
Now we consider the case 
$
\xi_1^2+\xi_2^2>c_P^{-2}.
$
In this case, $\xi_{P,3}$ is taken by
\[
\xi_{P,3}=\mathrm{i}\sqrt{\xi_1^2+\xi_2^2-c_P^{-2}}.
\]
To be consistent with the formula \eqref{xiP3}, we can choose the square root function $\sqrt{\,\cdot\,}$ such that $\Im\sqrt{z}>0$ for $z\in\mathbb{C}\setminus [0,+\infty)$. 
Then $\Re(-\mathrm{i}\xi_{P,3})>0$, and we can then construct $\varphi_P^-$ in a neighborhood of $p$ such that $\partial_{x_3}(\varphi_P^-)=-\xi_{P,3}$. Then $\Re(\mathrm{i}\varphi_P^-)<0$ for $x_3<0$ and consequently
\[
|e^{\mathrm{i}\varrho\varphi_P^-}|=\mathcal{O}(|x_3|^\infty)
\]
near $p$ in $\mathbb{R}\times\Omega$. Then we can construct $u_\varrho^-$ of the form \eqref{reflectedwaves} in a neighborhood of $p$ such that
\[
\rho\partial_t^2 u_{P,\varrho}^--\nabla\cdot S^L(u_{P,\varrho}^-)=0,\quad\text{up to order }N\text{ at point }p,
\]
and now $\chi^-_P$ is compactly supported in a neighborhood of $p$. We also refer to \cite{stefanov2021transmission} for a discussion on evanescent waves.
\subsection{Full Gaussian beam asymptotic solutions}
Now it is clear to see how to construct Gaussian beam solutions to the linear elastic wave equation incorporating all reflections at the boundary $(0,T)\times\partial \Omega$.

Assume $\vartheta:(t^-,t^+)\rightarrow M$ be a unit speed null-geodesic in $(M,-\mathrm{d}t^2+c_\bullet^{-2}\mathrm{d}s^2)$, with endpoints $\vartheta(t^-),\vartheta(t^+)\in\partial M$. Let $R_0\subset\partial M$ be a neighborhood of $\vartheta(t^-)$. Assume that $t^-\in (0,T)$. If $\vartheta$ is a forward null-geodesic then $t^+>t^-$, if $\vartheta$ is a backward null-geodesic then $t^+<t^-$.

Fix $k$ and $K$, by above discussions, we can take $N$ large enough and construct asymptotic solutions $u_\varrho$ such that
\[
\|\mathcal{L}_{\lambda,\mu,\rho}u_\varrho\|_{H^k((0,T)\times\Omega)}=\mathcal{O}(\varrho^{-K}),
\]
the boundary values of $u_\varrho$ satisfy
\[
\|\mathcal{N}_{\lambda,\mu}u_\varrho\|_{H^k((0,T)\times\partial\Omega\setminus R_0)}=\mathcal{O}(\varrho^{-K}).
\]
Actually, $u_\varrho=u_\varrho^{\mathrm{inc}}+u_\varrho^{\mathrm{ref}}$, where the incident wave $u_\varrho^{\mathrm{inc}}=u_\varrho^+$ is compactly supported in a neighborhood of $\vartheta$.
We remark here that $u_\varrho^{\mathrm{inc}}$ is a Gaussian beam starting from $\vartheta(t_-)$, and $u_\rho^{\mathrm{ref}}$ is generated by the reflection of the incident wave $u_\varrho^{\mathrm{inc}}$ at $\vartheta(t_+)$ and subsequent reflections.

\section{Proof of the main theorem}\label{mainproof}
\subsection{Second order linearization of the displacement-to-traction map}\label{SOL}
We first summarize the second order linearization of the displacement-to-traction map carried out in \cite{uhlmann2021inverse}. We also refer to \cite{kurylev2018inverse,lassas2018inverse,hintz2022inverse} for the use of higher order linearization in the study of inverse problems for nonlinear equations.\\

Take $\epsilon_1,\epsilon_2\in\mathbb{R}$ small enough, and let $u_\epsilon$ be the solution to the initial boundary value problem \eqref{elastic_eq} with boundary value $f=\epsilon_1f^{(1)}+\epsilon_2f^{(2)}$, then $u_\epsilon$ has the asymptotic expansion
\[
u_\epsilon=\epsilon_1u^{(1)}+\epsilon_2u^{(2)}+\frac{1}{2}\epsilon_1^2u^{(11)}+\frac{1}{2}\epsilon_2^2u^{(22)}+\epsilon_1\epsilon_2u^{(12)}+\text{higher order terms in }\epsilon_1,\epsilon_2.
\]
Here $u^{(1)},u^{(2)}$ are solutions to the linearized equation
\begin{equation}\label{linear_eqj}
\begin{split}
&\rho\frac{\partial^2u^{(j)}}{\partial t^2}-\nabla\cdot S^L(x,u^{(j)})=0,\quad (t,x)\in (0,T)\times\Omega,\\
&u^{(j)}(t,x)=f^{(j)}(t,x),\quad (t,x)\in (0,T)\times\partial \Omega,\\
&u^{(j)}(0,x)=\frac{\partial}{\partial t}u^{(j)}(0,x)=0,\quad x\in \Omega,
\end{split}
\end{equation}
and $u^{(12)}$ is the solution to the equation
\begin{equation}\label{linear_eq12}
\begin{split}
&\rho\frac{\partial^2u^{(12)}}{\partial t^2}-\nabla\cdot S^L(x,u^{(12)})=\nabla\cdot G(u^{(1)},u^{(2)}),\quad (t,x)\in (0,T)\times\Omega,\\
&u^{(12)}(t,x)=0,\quad (t,x)\in (0,T)\times\partial \Omega,\\
&u^{(12)}(0,x)=\frac{\partial}{\partial t}u^{(12)}(0,x)=0,\quad x\in \Omega,
\end{split}
\end{equation}
where $G_{ij}$ comes from the second order term of $u$ in $S(x,u)$,
\begin{equation}\label{G_form}
\begin{split}
G_{ij}(u^{(1)},u^{(2)})=&(\lambda+\mathscr{B})\frac{\partial u^{(1)}_m}{\partial x_n}\frac{\partial u^{(2)}_m}{\partial x_n}\delta_{ij}+2\mathscr{C}\frac{\partial u^{(1)}_m}{\partial x_m}\frac{\partial u^{(2)}_n}{\partial x_n}\delta_{ij}+\mathscr{B}\frac{\partial u_m^{(1)}}{\partial x_n}\frac{\partial u_n^{(2)}}{\partial x_m}\delta_{ij}\\
&+\mathscr{B}\left(\frac{\partial u^{(1)}_m}{\partial x_m}\frac{\partial u^{(2)}_j}{\partial x_i}+\frac{\partial u^{(2)}_m}{\partial x_m}\frac{\partial u^{(1)}_j}{\partial x_i}\right)+\frac{\mathscr{A}}{4}\left(\frac{\partial u^{(1)}_j}{\partial x_m}\frac{\partial u^{(2)}_m}{\partial x_i}+\frac{\partial u^{(2)}_j}{\partial x_m}\frac{\partial u^{(1)}_m}{\partial x_i}\right)\\
&+(\lambda+\mathscr{B})\left(\frac{\partial u^{(1)}_m}{\partial x_m}\frac{\partial u^{(2)}_i}{\partial x_j}+\frac{\partial u^{(2)}_m}{\partial x_m}\frac{\partial u^{(1)}_i}{\partial x_j}\right)\\
&+\left(\mu+\frac{\mathscr{A}}{4}\right)\Bigg(\frac{\partial u^{(1)}_m}{\partial x_i}\frac{\partial u^{(2)}_m}{\partial x_j}+\frac{\partial u^{(2)}_m}{\partial x_i}\frac{\partial u^{(1)}_m}{\partial x_j}+\frac{\partial u^{(1)}_i}{\partial x_m}\frac{\partial u^{(2)}_j}{\partial x_m}+\frac{\partial u^{(2)}_i}{\partial x_m}\frac{\partial u^{(1)}_j}{\partial x_m}\\
&\quad\quad+\frac{\partial u^{(1)}_i}{\partial x_m}\frac{\partial u^{(2)}_m}{\partial x_j}+\frac{\partial u^{(2)}_i}{\partial x_m}\frac{\partial u^{(1)}_m}{\partial x_j}\Bigg).
\end{split}
\end{equation}

We define the linear map
\[
\begin{split}
\Lambda^{'}_{D}:f^{(j)}&\mapsto \nu\cdot S^L(x,u^{(j)})\vert_{(0,T)\times\partial\Omega},\\
\end{split}
\]
and the bilinear map
\[
\begin{split}
\Lambda^{''}_{D}:(f^{(1)},f^{(2)})&\mapsto \left(\nu\cdot S^L(u^{(12)})+\nu\cdot G(u^{(1)},u^{(2)})\right)\Big\vert_{(0,T)\times\partial\Omega}.\\
\end{split}
\]
Here $f^{(1)},f^{(2)}$ vanish near $\{t=0\}$. We remark here that formally 
\[
\begin{split}
\Lambda^{'}_{D}(f^{(j)})&=\frac{\partial}{\partial\epsilon_j}\Lambda(\epsilon_jf^{(j)})\vert_{\epsilon_j=0},\\
\Lambda^{''}_{D}(f^{(1)},f^{(2)})&=\frac{\partial^2}{\partial\epsilon_1\partial\epsilon_2}\Lambda(\epsilon_1f^{(1)}+\epsilon_2f^{(2)})\vert_{\epsilon_1=\epsilon_2=0}.
\end{split}
\]
Therefore, we can recover $\Lambda'_D,\Lambda''_D$ from $\Lambda$.

Assume that $u^{(0)}$ solves the initial boundary value problem for the backward elastic wave equation
\begin{equation}\label{backward_eq}
\begin{split}
&\rho\frac{\partial^2}{\partial t^2}u^{(0)}-\nabla\cdot S^L(x,u^{(0)})=0,\quad (t,x)\in (0,T)\times\Omega,\\
&u(t,x)=f^{(0)}(t,x),\quad (t,x)\in (0,T)\times\partial \Omega,\\
&u^{(0)}(T,x)=\frac{\partial}{\partial t}u^{(0)}(T,x)=0,\quad x\in \Omega.
\end{split}
\end{equation}
Using integration by parts, we have (cf. \cite{uhlmann2021inverse})
\begin{equation}\label{integralform}
\int_0^T\int_{\partial \Omega}\Lambda^{''}_{D}(f^{(1)},f^{(2)})\cdot f^{(0)}\mathrm{d}S\mathrm{d}t\\
=\int_0^T\int_{\Omega}\mathcal{G}(\nabla u^{(1)},\nabla u^{(2)},\nabla u^{(0)})\mathrm{d}x\mathrm{d}t,
\end{equation}
where
\begin{equation}\label{integrand_G}
\begin{split}
&\mathcal{G}(\nabla u^{(1)},\nabla u^{(2)},\nabla u^{(0)})\\
=&(\lambda+\mathscr{B})(\nabla u^{(1)}:\nabla u^{(2)})(\nabla\cdot u^{(0)})+2\mathscr{C} (\nabla\cdot u^{(1)})(\nabla\cdot u^{(2)})(\nabla\cdot u^{(0)})\\
&+\mathscr{B}(\nabla u^{(1)}:\nabla^T u^{(2)})(\nabla\cdot u^{(0)})\\
&+\mathscr{B}\left( (\nabla\cdot u^{(1)})(\nabla u^{(2)}:\nabla^T u^{(0)})+(\nabla\cdot u^{(2)})(\nabla u^{(1)}:\nabla^T u^{(0)})\right)\\
&+\frac{\mathscr{A}}{4}\left(\frac{\partial u^{(1)}_j}{\partial x_m}\frac{\partial u^{(2)}_m}{\partial x_i}+\frac{\partial u^{(2)}_j}{\partial x_m}\frac{\partial u^{(1)}_m}{\partial x_i}\right)\frac{\partial u^{(0)}_i}{\partial x_j}\\
&+(\lambda+\mathscr{B})\left((\nabla\cdot u^{(1)})(\nabla u^{(2)}:\nabla u^{(0)})+(\nabla\cdot u^{(2)})(\nabla u^{(1)}:\nabla u^{(0)})\right)\\
&+\left(\mu+\frac{\mathscr{A}}{4}\right)\Bigg(\frac{\partial u^{(1)}_m}{\partial x_i}\frac{\partial u^{(2)}_m}{\partial x_j}+\frac{\partial u^{(2)}_m}{\partial x_i}\frac{\partial u^{(1)}_m}{\partial x_j}+\frac{\partial u^{(1)}_i}{\partial x_m}\frac{\partial u^{(2)}_j}{\partial x_m}+\frac{\partial u^{(2)}_i}{\partial x_m}\frac{\partial u^{(1)}_j}{\partial x_m}\\
&\quad\quad\quad\quad\quad\quad\quad\quad\quad+\frac{\partial u^{(1)}_i}{\partial x_m}\frac{\partial u^{(2)}_m}{\partial x_j}+\frac{\partial u^{(2)}_i}{\partial x_m}\frac{\partial u^{(1)}_m}{\partial x_j}\Bigg)\frac{\partial u^{(0)}_i}{\partial x_j}.
\end{split}
\end{equation}

\noindent\textbf{Linearized traction-to-displacement map.} 
Let $v$ be the solution to the initial boundary value problem with Neumann boundary value
\begin{equation}\label{linear_eqj_v}
\begin{split}
&\rho\frac{\partial^2v}{\partial t^2}-\nabla\cdot S^L(x,v)=0,\quad (t,x)\in (0,T)\times\Omega,\\
&\nu\cdot S^L(x,v)=g,\quad (t,x)\in (0,T)\times\partial \Omega,\\
&v(0,x)=\frac{\partial}{\partial t}v(0,x)=0,\quad x\in \Omega,
\end{split}
\end{equation}
where $g$ is supported away from $\{t=0\}$.
Denote the Neumann-to-Dirichlet map
\[
\Lambda^{'}_{N}:g\mapsto v\vert_{(0,T)\times\partial\Omega}.
\]
It is clear to see that
\[
\Lambda^{'}_{N}=(\Lambda^{'}_{D})^{-1}.
\]

Let $v^{(j)}, j=1,2$ be solution to \eqref{linear_eqj_v} with $g=g^{(1)}$, and $v^{(12)}$ be solution to the following initial boundary value problem
\begin{equation}\label{linear_eq12_v}
\begin{split}
&\rho\frac{\partial^2v^{(12)}}{\partial t^2}-\nabla\cdot S^L(x,u^{(12)})=\nabla\cdot G(v^{(1)},v^{(2)}),\quad (t,x)\in (0,T)\times\Omega,\\
&\nu\cdot S^L(x,v^{(12)})=-\nu\cdot G(v^{(1)},v^{(2)}),\quad (t,x)\in (0,T)\times\partial \Omega,\\
&v^{(12)}(0,x)=\frac{\partial}{\partial t}v^{(12)}(0,x)=0,\quad x\in \Omega.
\end{split}
\end{equation}
Denote then
\[
\Lambda^{''}_{N}:(g^{(1)},g^{(2)})\mapsto v^{(12)}\vert_{(0,T)\times\partial\Omega}.
\]

Assume $v^{(0)}$ solves the initial boundary value problem for the backward elastic wave equation
\begin{equation}\label{backward_eq}
\begin{split}
&\rho\frac{\partial^2}{\partial t^2}v^{(0)}-\nabla\cdot S^L(x,v^{(0)})=0,\quad (t,x)\in (0,T)\times\Omega,\\
&\nu\cdot S^L(x,v^{(0)})(t,x)=g^{(0)},\quad (t,x)\in (0,T)\times\partial \Omega,\\
&v^{(0)}(T,x)=\frac{\partial}{\partial t}v^{(0)}(T,x)=0,\quad x\in \Omega.
\end{split}
\end{equation}
Using integration by parts, we calculate
\begin{equation}\label{integralform}
\begin{split}
&\int_0^T\int_{\partial \Omega}\Lambda^{''}_{N}(g^{(1)},g^{(2)})\cdot g^{(0)}\mathrm{d}S\mathrm{d}t\\
=&\int_0^T\int_{\partial \Omega}v^{(12)}\cdot (\nu\cdot S^L(x,v^{(0)}))\mathrm{d}S\mathrm{d}t\\
=&\int_0^T\int_{\Omega}(\nabla\cdot S^L(x,v^{(0)}))\cdot v^{(12)}\mathrm{d}x\mathrm{d}t+\int_0^T\int_\Omega S^L(x,v^{(0)}):\nabla v^{(12)}\mathrm{d}x\mathrm{d}t\\
=&\int_0^T\int_{\Omega}\rho\frac{\partial^2v^{(0)}}{\partial t^2}\cdot v^{(12)}\mathrm{d}x\mathrm{d}t-\int_0^T\int_\Omega v^{(0)}\cdot (\nabla\cdot S^L(x,v^{(12)}))\mathrm{d}x\mathrm{d}t\\
&\quad\quad\quad\quad\quad\quad\quad\quad\quad\quad\quad\quad\quad\quad\quad\quad+\int_0^T\int_{\partial \Omega}v^{(0)}\cdot(\nu\cdot S^L(x,v^{(12)})\mathrm{d}S\mathrm{d}t\\
=&\int_0^T\int_{\Omega}v^{(0)}\cdot\left(\rho\frac{\partial^2v^{(12)}}{\partial t^2}-\nabla\cdot S^L(x,v^{(12)})\right)\mathrm{d}x\mathrm{d}t-\int_0^T\int_{\partial \Omega}v^{(0)}\cdot(\nu\cdot G(v^{(1)},v^{(2)}))\mathrm{d}S\mathrm{d}t\\
=&\int_0^T\int_{\Omega}v^{(0)}\cdot(\nabla\cdot G(v^{(1)},v^{(2)}))\mathrm{d}x\mathrm{d}t-\int_0^T\int_{\partial \Omega}v^{(0)}\cdot(\nu\cdot G(v^{(1)},v^{(2)}))\mathrm{d}S\mathrm{d}t\\
=&-\int_0^T\int_{\Omega}\mathcal{G}(\nabla v^{(1)},\nabla v^{(2)},\nabla v^{(0)})\mathrm{d}x\mathrm{d}t,
\end{split}
\end{equation}
Note that
\[
\begin{split}
&\int_0^T\int_{\partial \Omega}\Lambda^{''}_{N}(g^{(1)},g^{(2)})\cdot g^{(0)}\mathrm{d}S\mathrm{d}t\\
=&-\int_0^T\int_{\Omega}\mathcal{G}(\nabla v^{(1)},\nabla v^{(2)},\nabla v^{(0)})\mathrm{d}x\mathrm{d}t\\
=&-\int_0^T\int_{\partial \Omega}\Lambda^{''}_{D}(v^{(1)},v^{(2)})\cdot v^{(0)}\mathrm{d}S\mathrm{d}t\\
=&-\int_0^T\int_{\partial \Omega}\Lambda^{''}_{D}(\Lambda^{'}_{N}(g^{(1)}),\Lambda^{'}_{N}(g^{(2)}))\cdot\Lambda^{'}_{N}(g^{(0)})\mathrm{d}S\mathrm{d}t.
\end{split}
\]

Therefore, for the rest of the paper, we only need to consider the problem of recovering the parameters $\lambda,\mu,\rho,\mathscr{A},\mathscr{B},\mathscr{C}$ from $\Lambda^{'}_{D}$ and $\Lambda^{''}_{N}$.\\

Now assume that $\Lambda$ is the displacement-to-traction map associated with $\lambda,\mu,\rho,\mathscr{A},\mathscr{B},\mathscr{C}$, and $\widetilde{\Lambda}$ is the displacement-to-traction map associated with $\widetilde{\lambda},\widetilde{\mu},\widetilde{\rho},\widetilde{\mathscr{A}},\widetilde{\mathscr{B}},\widetilde{\mathscr{C}}$. Our goal is to prove that $\Lambda=\widetilde{\Lambda}$ implies
\[
(\lambda,\mu,\rho,\mathscr{A},\mathscr{B},\mathscr{C})=(\widetilde{\lambda},\widetilde{\mu},\widetilde{\rho},\widetilde{\mathscr{A}},\widetilde{\mathscr{B}},\widetilde{\mathscr{C}}).
\]
\subsection{Uniqueness of the wavespeeds}
Now assume $\Lambda'_D=\widetilde{\Lambda}'_D$ (or equivalently $\Lambda^{\mathrm{lin}}=\widetilde{\Lambda}^{\mathrm{lin}}$) and denote
\[
g_P=c_P^{-2}\mathrm{d}s^2=\frac{\rho}{\lambda+2\mu}\mathrm{d}s^2,\quad g_S=c_S^{-2}\mathrm{d}s^2=\frac{\rho}{\mu}\mathrm{d}s^2,
\]
and similarly
\[
\widetilde{g}_P=\widetilde{c}_P^{-2}\mathrm{d}s^2=\frac{\widetilde{\rho}}{\widetilde{\lambda}+2\widetilde{\mu}}\mathrm{d}s^2,\quad \widetilde{g}_S=\widetilde{c}_S^{-2}\mathrm{d}s^2=\frac{\widetilde{\rho}}{\widetilde{\mu}}\mathrm{d}s^2.
\]

For the uniqueness of the two wavespeeds, we summarize the results in \cite{rachele2000inverse}, \cite{stefanov2017local} in the following proposition.
\begin{proposition}
Assume that $(\Omega,g_\bullet)$ and $(\Omega,\widetilde{g}_\bullet)$, where $\bullet=P,S$, satisfy either of the following two conditions
\begin{enumerate}
\item $(\Omega,g_\bullet/\widetilde{g}_\bullet)$ is simple;
\item $(\Omega,g_\bullet/\widetilde{g}_\bullet)$ satisfies the strictly convex foliation condition.
\end{enumerate}
 Then $\Lambda^{\mathrm{lin}}=\widetilde{\Lambda}^{\mathrm{lin}}$ implies that
\begin{equation}\label{equal_lmr}
c_P=\widetilde{c}_P,\quad c_S=\widetilde{c}_S, \text{ or equivalently,  }\quad\frac{\mu}{\rho}=\frac{\widetilde{\mu}}{\widetilde{\rho}},\quad\frac{\lambda}{\rho}=\frac{\widetilde{\lambda}}{\widetilde{\rho}},\quad\text{in }\overline{\Omega}.
\end{equation}
\end{proposition}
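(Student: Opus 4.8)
The plan is to reduce the statement, separately for the $P$- and the $S$-polarization, to a boundary rigidity problem for a conformally Euclidean metric, and then to quote the known solution of that problem under each of the two geometric hypotheses. As a preliminary step one has the \emph{boundary determination}: factoring the (elliptic in the cotangential directions) operator $\Lambda^{\mathrm{lin}}$ as a product of first-order pseudodifferential operators and computing its full symbol along $\partial\Omega$ shows that $\Lambda^{\mathrm{lin}}=\widetilde\Lambda^{\mathrm{lin}}$ forces $\lambda,\mu,\rho$ and all of their normal derivatives to agree on $\partial\Omega$; this is the elastic analogue of the classical boundary determination and is carried out in \cite{rachele2000inverse}. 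In particular $c_P=\widetilde c_P$ and $c_S=\widetilde c_S$ to infinite order at $\partial\Omega$, so the two pairs of metrics glue to common extensions past $\partial\Omega$ and it suffices to prove interior uniqueness.

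Next, for a fixed speed $c_\bullet$ ($\bullet=P,S$) and a maximal $g_\bullet$-geodesic $\gamma$ joining two boundary points, I would feed $\Lambda^{\mathrm{lin}}$ a Gaussian beam boundary source concentrated at the initial endpoint of $\gamma$ and polarized appropriately --- along $\dot\gamma$ for a $P$-beam, transverse to $\dot\gamma$ for an $S$-beam. Using the Gaussian beam construction of Section \ref{gaussianbeams} together with its reflection analysis in Section \ref{gaussianreflection}, the response of $\Lambda^{\mathrm{lin}}$ reproduces, to leading order in the large parameter $\varrho$, a wave packet arriving at the terminal endpoint of $\gamma$ carrying a phase whose leading term is the $g_\bullet$-length of $\gamma$; since $c_P>c_S$ on $\overline\Omega$ the $P$- and $S$-contributions travel at strictly different speeds and are further distinguished by their polarization, so they can be separated in the boundary data. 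Hence $\Lambda^{\mathrm{lin}}=\widetilde\Lambda^{\mathrm{lin}}$ implies that $(\Omega,g_\bullet)$ and $(\Omega,\widetilde g_\bullet)$ carry the same lens data --- the scattering relation together with the travel times --- and in particular the same boundary distance function, for $\bullet=P,S$.

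Finally I would invoke boundary rigidity for conformally Euclidean metrics. Since $g_\bullet=c_\bullet^{-2}\,\mathrm{d}s^2$ lies in the conformal class of the Euclidean metric, under hypothesis (1) equality of boundary distance functions on a simple manifold determines the conformal factor by Mukhometov's theorem \cite{muhometov1978problem} (equivalently, $s$-injectivity of the geodesic ray transform on functions over a simple manifold), while under hypothesis (2) the same conclusion follows from the local boundary rigidity / local geodesic X-ray transform results of \cite{stefanov2016boundary,stefanov2017local}, propagated throughout $\Omega$ by a layer-stripping argument along the strictly convex foliation. In either case $c_\bullet=\widetilde c_\bullet$ for $\bullet=P,S$, that is $\mu/\rho=\widetilde\mu/\widetilde\rho$ and $(\lambda+2\mu)/\rho=(\widetilde\lambda+2\widetilde\mu)/\widetilde\rho$, whence also $\lambda/\rho=\widetilde\lambda/\widetilde\rho$, which is \eqref{equal_lmr}. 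I expect the main obstacle to be the second step: extracting the \emph{pure} $P$- and $S$-travel times from the boundary data despite the $P/S$ mode conversion that occurs at every reflection. This is precisely where one uses the spectral splitting of the principal symbol of the elasticity operator into its $P$- and $S$-eigenspaces, available because $c_P\neq c_S$, together with the careful bookkeeping of reflected Gaussian beams developed in Section \ref{gaussianreflection}.
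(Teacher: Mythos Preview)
The paper does not actually give a proof of this proposition; the sentence immediately preceding it reads ``we summarize the results in \cite{rachele2000inverse}, \cite{stefanov2017local} in the following proposition,'' and no argument follows. Your sketch is essentially the strategy carried out in those cited works: boundary determination of $\lambda,\mu,\rho$ from the full symbol of $\Lambda^{\mathrm{lin}}$, recovery of the $P$- and $S$-lens data from $\Lambda^{\mathrm{lin}}$ via propagation of singularities/geometric optics solutions, and finally boundary (lens) rigidity for conformally Euclidean metrics under simplicity \cite{muhometov1978problem} or under the foliation condition \cite{stefanov2016boundary}. So your approach matches what the paper is \emph{citing}, not anything the paper itself does.

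One remark on the middle step. You invoke the Gaussian beams of Sections~\ref{gaussianbeams}--\ref{gaussianreflection} of the present paper, including the reflection analysis, to extract travel times. The cited works do this via their own microlocal parametrices, and in fact the reflection bookkeeping is not needed for this purpose: to read off the $g_\bullet$-travel time between two boundary points one only has to detect the \emph{first} arrival of the singularity launched by a pure $P$- or $S$-mode boundary source, which occurs before any reflection. The $P/S$ mode conversion you worry about is a genuine issue for subsequent reflections (and is the reason Section~\ref{gaussianreflection} exists in this paper, where interior nonlinear interactions must be tracked after reflection), but it does not obstruct the recovery of the direct lens relation. So the obstacle you anticipate is milder than you suggest.
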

From now on we assume the equality \eqref{equal_lmr} to hold, and therefore $g_\bullet=\widetilde{g}_\bullet$ for $\bullet=S,P$. We only need to work with the metrics $g_\bullet=c_\bullet^{-2}\mathrm{d}s^2$ in the following.

\subsection{An integral identity}\label{threegaussians}
 Fix $k$ large enough. By previous section, we can construct $u_\varrho$ and $\widetilde{u}_\varrho$ such that
 \begin{equation}\label{est_rmder}
 \|\mathcal{L}_{\lambda,\mu,\rho}u_\varrho\|_{H^k}=\mathcal{O}(\varrho^{-K}),\quad \| \mathcal{L}_{\widetilde{\lambda},\widetilde{\mu},\widetilde{\rho}}\widetilde{u}_\varrho\|_{H^k}=\mathcal{O}(\varrho^{-K}),
 \end{equation}
 where $u_\varrho=u_\varrho^{\mathrm{inc}}+u_\varrho^{\mathrm{ref}}$, $\widetilde{u}_\varrho=\widetilde{u}_\varrho^{\mathrm{inc}}+\widetilde{u}_\varrho^{\mathrm{ref}}$, and $u^{\mathrm{inc}}_\varrho$ and $\widetilde{u}^{\mathrm{inc}}_\varrho$ are Gaussian beam solutions concentrating near a null-geodesic $\vartheta$ in $(M,-\mathrm{d}t^2+c_\bullet^{-2}\mathrm{d}s^2)$. Furthermore we have
 \[
\|\mathcal{N}_{\lambda,\mu}u_\varrho\|_{H^k((0,T)\times\partial\Omega\setminus R_0)}=\mathcal{O}(\varrho^{-K}),\quad \|\mathcal{N}_{\widetilde{\lambda},\widetilde{\mu}}\widetilde{u}_\varrho\|_{H^k((0,T)\times\partial\Omega\setminus R_0)}=\mathcal{O}(\varrho^{-K}).
\]

Since $\Lambda^{\mathrm{lin}}=\widetilde{\Lambda}^{\mathrm{lin}}$, we know that jets of $(\lambda,\mu,\rho)$ and $(\widetilde{\lambda},\widetilde{\mu},\widetilde{\rho})$ at $\partial\Omega$ are equal (cf. \cite{rachele2000boundary}). Therefore we can denote
\[
\mathcal{N}=\mathcal{N}_{\lambda,\mu}=\mathcal{N}_{\widetilde{\lambda},\widetilde{\mu}}.
\]
Also we can extend $(\lambda,\mu,\rho)$ and $(\widetilde{\lambda},\widetilde{\mu},\widetilde{\rho})$ smoothly to $\widetilde{\Omega}$ such that $(\lambda,\mu,\rho)=(\widetilde{\lambda},\widetilde{\mu},\widetilde{\rho})$ on $\widetilde{\Omega}\setminus\Omega$. By the constructions in Section \ref{gaussianbeams}, we can arrange the boundary values of $u_\varrho^{\mathrm{inc}}$ and $\widetilde{u}_\varrho^{\mathrm{inc}}$ such that
\begin{equation}\label{diff_bdryvalue}
\|\mathcal{N}(u_\varrho-\widetilde{u}_\varrho)\|_{H^k(R_0)}=\|\mathcal{N}(u^{\mathrm{inc}}_\varrho-\widetilde{u}^{\mathrm{inc}}_\varrho)\|_{H^k(R_0)}\leq C\varrho^{-K}.
\end{equation}
We refer to \cite{hintz2022dirichlet} for more details.

Let $f_\varrho=u_\varrho\vert_{(0,T)\times\partial\Omega}$.
Then one can construct solutions $u$ and $\widetilde{u}$ to
\begin{equation}\label{eq_u}
\begin{split}
\mathcal{L}_{\lambda,\mu,\rho}u&=0,\quad \text{on }(0,T)\times\Omega,\\
\mathcal{N}u&=f_{\varrho},\quad\text{on }(0,T)\times\partial\Omega,\\
u(0,x)=\partial_tu(0,x)&=0,\quad\text{for }x\in\Omega.
\end{split}
\end{equation}
and
\begin{equation}\label{eq_utilde}
\begin{split}
\mathcal{L}_{\widetilde{\lambda},\widetilde{\mu},\widetilde{\rho}}\widetilde{u}&=0,\quad \text{on }(0,T)\times\Omega,\\
\mathcal{N}\widetilde{u}&=f_{\varrho},\quad\text{on }(0,T)\times\partial\Omega,\\
\widetilde{u}(0,x)=\partial_t\widetilde{u}(0,x)&=0,\quad\text{for }x\in\Omega.
\end{split}
\end{equation}
To be more precise, we construct above solutions such that
\[
u=u_\varrho+R_\varrho,\quad \widetilde{u}=\widetilde{u}_\varrho+\widetilde{R}_\varrho,
\]
where $R_\varrho$ and $\widetilde{R}_\varrho$ satisfy
\begin{equation}\label{eq_R}
\begin{split}
\mathcal{L}_{\lambda,\mu,\rho}R_\varrho&=-\mathcal{L}_{\lambda,\mu,\rho}u_\varrho,\quad \text{on }(0,T)\times\Omega,\\
\mathcal{N}R&=0,\quad\text{on }(0,T)\times\partial\Omega,\\
R(0,x)=\partial_tR(0,x)&=0,\quad\text{for }x\in\Omega,
\end{split}
\end{equation}
and
\begin{equation}\label{eq_Rtilde}
\begin{split}
\mathcal{L}_{\widetilde{\lambda},\widetilde{\mu},\widetilde{\rho}}\widetilde{R}_\varrho&=-\mathcal{L}_{\widetilde{\lambda},\widetilde{\mu},\widetilde{\rho}}\widetilde{u}_\varrho,\quad \text{on }(0,T)\times\Omega,\\
\mathcal{N}\widetilde{R}&=(u_\varrho-\widetilde{u}_\varrho)\vert_{(0,T)\times\partial\Omega},\quad\text{on }(0,T)\times\partial\Omega,\\
\widetilde{R}(0,x)=\partial_t\widetilde{R}(0,x)&=0,\quad\text{for }x\in\Omega.
\end{split}
\end{equation}
By \eqref{est_rmder} and \eqref{diff_bdryvalue}, one can obtain 
\[
\|R_\varrho\|_{H^{k+1}},\|\widetilde{R}_\varrho\|_{H^{k+1}}\leq C\varrho^{-K}.
\]
using standard theory for linear hyperbolic systems. Take $K$ large enough, then if $k>2$ we can use Sobolev imbedding to have
\[
\|R_\varrho\|_{W^{1,3}},\|\widetilde{R}_\varrho\|_{W^{1,3}}=\mathcal{O}(\varrho^{-1/2}).
\]
We note also that
\[
\|u_\varrho\|_{W^{1,3}},\|\widetilde{u}_\varrho\|_{W^{1,3}}=\mathcal{O}(\varrho^{1/2}).
\]
\\

Let $\vartheta^{(j)}$ be a null-geodesic, and construct Gaussian beam solutions $u^{(j)}_{\kappa_j\varrho}$ and $\widetilde{u}^{(j)}_{\kappa_j\varrho}$, whose incident waves $u^{(j),\mathrm{inc}}_{\kappa_j\varrho}$ and $\widetilde{u}^{(j),\mathrm{inc}}_{\kappa_j\varrho}$ concentrate near $\vartheta^{(j)}$, where $\kappa_j$ is a constant. Then we construct $u^{(j)}$ and $\widetilde{u}^{(j)}$, $j=1,2$ be solutions to \eqref{eq_u} and \eqref{eq_utilde} with $f_{\kappa_j\varrho}=u^{(j)}_{\kappa_j\varrho}\vert_{(0,T)\times\partial\Omega}$. Also, let $\vartheta^{(0)}$ be a backward null-geodesic, and construct Gaussian beam solution $u^{(0)}_{\kappa_0\varrho}$, $\widetilde{u}^{(0)}_{\kappa_0\varrho}$, whose incident waves $u^{(0),\mathrm{inc}}_{\kappa_0\varrho}$ and $\widetilde{u}^{(0),\mathrm{inc}}_{\kappa_0\varrho}$ concentrate near $\vartheta^{(0)}$. Then let $f^{(0)}_{\kappa_0\varrho}=u^{(0)}_{\kappa_0\varrho}\vert_{(0,T)\times\partial\Omega}$ and construct exact solutions $u^{(0)}$, $\widetilde{u}^{(0)}$ to the backward elastic wave equations
\begin{equation}\label{eq_u0}
\begin{split}
\mathcal{L}_{\lambda,\mu,\rho}u^{(0)}&=0,\quad \text{on }(0,T)\times\Omega,\\
\mathcal{N}u^{(0)}&=f^{(0)}_{\kappa_0\varrho},\quad\text{on }(0,T)\times\partial\Omega,\\
u^{(0)}(T,x)=\partial_tu^{(0)}(T,x)&=0,\quad\text{for }x\in\Omega,
\end{split}
\end{equation}
and
\begin{equation}\label{eq_u0tilde}
\begin{split}
\mathcal{L}_{\widetilde{\lambda},\widetilde{\mu},\widetilde{\rho}}\widetilde{u}^{(0)}&=0,\quad \text{on }(0,T)\times\Omega,\\
\mathcal{N}\widetilde{u}^{(0)}&=f^{(0)}_{\kappa_0\varrho},\quad\text{on }(0,T)\times\partial\Omega,\\
\widetilde{u}^{(0)}(T,x)=\partial_t\widetilde{u}^{(0)}(T,x)&=0,\quad\text{for }x\in\Omega.
\end{split}
\end{equation}

Then we carry out the second order linearization of traction-to-displacement map $\Lambda$ (and $\widetilde{\Lambda}$), the identity
\[
\Lambda=\widetilde{\Lambda},
\]
yields
\[
\int_0^T\int_{\partial\Omega}\Lambda_N''(f^{(1)}_{\kappa_1\varrho},f^{(2)}_{\kappa_2\varrho})\cdot f^{(0)}_{\kappa_0\varrho}\mathrm{d}S\mathrm{d}t=\int_0^T\int_{\partial\Omega}\widetilde{\Lambda}_N''(f^{(1)}_{\kappa_1\varrho},f^{(2)}_{\kappa_2\varrho})\cdot f^{(0)}_{\kappa_0\varrho}\mathrm{d}S\mathrm{d}t,
\]
and using \eqref{integralform} we obtain
\begin{equation}\label{eqaulityI}
\mathcal{I}:=\int_0^T\int_{\Omega}\mathcal{G}(\nabla u^{(1)},\nabla u^{(2)},\nabla u^{(0)})\,\mathrm{d}x\mathrm{d}t=\int_0^T\int_{\Omega}\widetilde{\mathcal{G}}(\nabla \widetilde{u}^{(1)},\nabla \widetilde{u}^{(2)},\nabla \widetilde{u}^{(0)})\,\mathrm{d}x\mathrm{d}t=:\widetilde{\mathcal{I}},
\end{equation}
In the following, for the sake of simplicity we denote
\[
\begin{split}
u_{\kappa_1\varrho}^{(1),\mathrm{inc}}=e^{\mathrm{i}\kappa_1\varrho\varphi^{(1)}}\chi^{(1)}(\mathbf{a}^{(1)}+\mathcal{O}(\varrho^{-1})),\\
u_{\kappa_2\varrho}^{(2),\mathrm{inc}}=e^{\mathrm{i}\kappa_2\varrho\varphi^{(2)}}\chi^{(2)}(\mathbf{a}^{(2)}+\mathcal{O}(\varrho^{-1})),\\
u_{\kappa_0\varrho}^{(0),\mathrm{inc}}=e^{\mathrm{i}\kappa_0\varrho\varphi^{(0)}}\chi^{(0)}(\mathbf{a}^{(0)}+\mathcal{O}(\varrho^{-1})),
\end{split}
\]
where $\mathbf{a}^{(j)}:=\mathbf{a}_0^{(j)}$.
If the support of $u_{\kappa_1\varrho}^{(1)}u_{\kappa_2\varrho}^{(2)}u_{\kappa_0\varrho}^{(0)}$ is equal to the support of $u_{\kappa_1\varrho}^{(1),\mathrm{inc}}u_{\kappa_2\varrho}^{(2),\mathrm{inc}}u_{\kappa_0\varrho}^{(0),\mathrm{inc}}$ (which is the case in the following proof), we have (see \cite{uhlmann2021inverse} for more details)
\begin{equation}\label{Iasym}
\varrho^{-1}\frac{\mathrm{i}}{\kappa_1\kappa_2\kappa_3}\mathcal{I}=\rho^2\int_0^T\int_\Omega e^{\mathrm{i}\varrho S}\chi^{(1)}\chi^{(2)}\chi^{(0)}\mathcal{A}\mathrm{d}x\mathrm{d}t+\mathcal{O}(\varrho^{-1/2}),
\end{equation}
where
\begin{equation}\label{def_S}
S=\kappa_1\varphi^{(1)}+\kappa_2\varphi^{(2)}+\kappa_0\varphi^{(0)}
\end{equation}
and
\[
\mathcal{A}:=\mathcal{G}(\mathbf{a}^{(1)}\otimes\nabla\varphi^{(1)},\mathbf{a}^{(2)}\otimes\nabla\varphi^{(2)},\mathbf{a}^{(0)}\otimes\nabla\varphi^{(0)})
\]
with
\[
\begin{split}
&\mathcal{G}(\mathbf{a}^{(1)}\otimes\nabla\varphi^{(1)},\mathbf{a}^{(2)}\otimes\nabla\varphi^{(2)},\mathbf{a}^{(0)}\otimes\nabla\varphi^{(0)})\\
=&\mathscr{B}[(\mathbf{a}^{(1)}\cdot\nabla\varphi^{(1)})(\mathbf{a}^{(2)}\cdot\nabla\varphi^{(0)})(\mathbf{a}^{(0)}\cdot\nabla\varphi^{(2)})+(\mathbf{a}^{(2)}\cdot\nabla\varphi^{(2)})(\mathbf{a}^{(1)}\cdot\nabla\varphi^{(0)})(\mathbf{a}^{(0)}\cdot\nabla\varphi^{(1)})\\
&\quad\quad\quad\quad+ (\mathbf{a}^{(2)}\cdot\nabla\varphi^{(1)})(\mathbf{a}^{(1)}\cdot\nabla\varphi^{(2)})(\mathbf{a}^{(0)}\cdot\nabla\varphi^{(0)})]\\
&+\frac{\mathscr{A}}{4}\left((\mathbf{a}^{(2)}\cdot\nabla\varphi^{(1)})(\mathbf{a}^{(1)}\cdot\nabla\varphi^{(0)})(\mathbf{a}^{(0)}\cdot\nabla\varphi^{(2)})+(\mathbf{a}^{(1)}\cdot\nabla\varphi^{(2)})(\mathbf{a}^{(2)}\cdot\nabla\varphi^{(0)})(\mathbf{a}^{(0)}\cdot\nabla\varphi^{(1)})\right)\\
&+(\lambda+\mathscr{B})[(\mathbf{a}^{(1)}\cdot\nabla\varphi^{(1)})(\mathbf{a}^{(2)}\cdot\mathbf{a}^{(0)})(\nabla\varphi^{(2)}\cdot\nabla\varphi^{(0)})+(\mathbf{a}^{(2)}\cdot\nabla\varphi^{(2)})(\mathbf{a}^{(1)}\cdot\mathbf{a}^{(0)})(\nabla\varphi^{(1)}\cdot\nabla\varphi^{(0)})\\
&\quad\quad+(\mathbf{a}^{(1)}\cdot\mathbf{a}^{(2)})(\nabla\varphi^{(1)}\cdot\nabla\varphi^{(2)})(\mathbf{a}^{(0)}\cdot\nabla\varphi^{(0)})]+2\mathscr{C}(\mathbf{a}^{(1)}\cdot\nabla\varphi^{(1)})(\mathbf{a}^{(2)}\cdot\nabla\varphi^{(2)})(\mathbf{a}^{(0)}\cdot\nabla\varphi^{(0)})\\
&+(\mu+\frac{\mathscr{A}}{4})\Big((\mathbf{a}^{(1)}\cdot\mathbf{a}^{(2)})(\nabla\varphi^{(1)}\cdot\mathbf{a}^{(0)})(\nabla\varphi^{(2)}\cdot\nabla\varphi^{(0)})+(\nabla\varphi^{(1)}\cdot\nabla\varphi^{(2)})(\mathbf{a}^{(1)}\cdot\mathbf{a}^{(0)})(\mathbf{a}^{(2)}\cdot\nabla\varphi^{(0)})\\
&\quad \quad+(\mathbf{a}^{(2)}\cdot\mathbf{a}^{(1)})(\nabla\varphi^{(2)}\cdot\mathbf{a}^{(0)})(\nabla\varphi^{(1)}\cdot\nabla\varphi^{(0)})+(\mathbf{a}^{(2)}\cdot\mathbf{a}^{(0)})(\mathbf{a}^{(1)}\cdot\nabla\varphi^{(0)})(\nabla\varphi^{(1)}\cdot\nabla\varphi^{(2)})\\
&\quad\quad+(\nabla\varphi^{(1)}\cdot\mathbf{a}^{(2)})(\nabla\varphi^{(2)}\cdot\nabla\varphi^{(0)})(\mathbf{a}^{(1)}\cdot\mathbf{a}^{(0)})+(\nabla\varphi^{(2)}\cdot\mathbf{a}^{(1)})(\nabla\varphi^{(1)}\cdot\nabla\varphi^{(0)})(\mathbf{a}^{(2)}\cdot\mathbf{a}^{(0)})\Big).
\end{split}
\]


\subsection{Construction of S-S-P waves}
We first introduce the notation
\[
L_q^{\bullet,\pm}M:=\{(\tau,\xi)\in T^*_qM,\,\tau^2=c_\bullet^2|\xi|^2,\,\pm \tau>0\},
\]
where $\bullet=P,S$.

Fix a point $x_0\in\Omega$ and take $q=(\frac{T}{2},x_0)\in M$.
Take $\vartheta^{(j)}:(t_-^{(j)},t_+^{(j)})\rightarrow\Omega$ be a null-geodesic in $((0,T)\times\Omega,\overline{g}_S)$ for $j=1,2$, and $\vartheta^{(0)}:(t_+^{(0)},t_-^{(0)})\rightarrow(0,T)\times\Omega$ be a backward null-geodesic in $((0,T)\times\Omega,\overline{g}_P)$, satisfying the following conditions:
\begin{enumerate}
\item $\vartheta^{(1)},\vartheta^{(2)},\vartheta^{(0)}$ intersect at $q$, that is
\[
\vartheta^{(1)}\left(\frac{T}{2}\right)=\vartheta^{(2)}\left(\frac{T}{2}\right)=\vartheta^{(0)}\left(\frac{T}{2}\right)=p;
\]
\item denote $\zeta^{(j)}=\mathrm{d}\vartheta^{(j)}\vert_q$, then $\zeta^{(1)},\zeta^{(2)}\in L^{S,+}_qM$, $\zeta^{(0)}\in L^{P,-}_qM$ such that
\[
\kappa_1\zeta^{(1)}+\kappa_2\zeta^{(2)}+\kappa_0\zeta^{(0)}=0.
\]
with $\kappa_1,\kappa_2,\kappa_0$ not equal to zero at the same time, $\zeta^{(1)}$ and $\zeta^{(2)}$ are linearly independent.
\end{enumerate}

Since $(\Omega,g_\bullet)$ is non-trapping and $\partial\Omega$ is convex with respect to $g_\bullet$, we can choose $\gamma^{(1)}$ and $\gamma^{(0)}$ such that 
\begin{itemize}
\item $\gamma^{(1)}([t_-^{(1)},\frac{T}{2}])$, as a geodesic in $(\overline{\Omega},g_S)$, has no conjugate points on it;
\item $\gamma^{(0)}([\frac{T}{2},t_-^{(0)}])$, as a geodesic in $(\overline{\Omega},g_P)$, has no conjugate points on it.
\end{itemize}
For $\zeta^{(1)},\zeta^{(0)}$ given, the vector $\zeta^{(2)}$ can be chosen in the following way (see also \cite{de2018nonlinear}). Take $\zeta^{(1)}\in L_q^{S,+}M$, $\zeta^{(0)}\in L_q^{P,-}M$. We write $\zeta^{(1)}=(\tau^{(1)},\xi^{(1)})$, $\zeta^{(0)}=(\tau^{(0)},\xi^{(0)})$, $\xi^{(1)},\xi^{(0)}\in\mathbb{R}^3$, $|\xi^{(1)}|=|\xi^{(0)}|=1$. Then we have
\[
(\tau^{(1)})^2=c_S^2|\xi^{(1)}|^2,\quad (\tau^{(0)})^2=c_P^2|\xi^{(0)}|^2.
\]
Then, we have $\tau^{(1)}=c_S$ and $\tau^{(0)}=-c_P$.
Then we consider the vector $\zeta^{(2)}$ of the form $\zeta^{(2)}=a\zeta^{(1)}+b\zeta^{(0)}$, $a,b\in\mathbb{R}$. Without loss of generality, we take $b=1$. In order for $\zeta^{(2)}$ to be in $L_q^{S,+}M$ and $\zeta^{(1)},\zeta^{(2)}$ are linearly independent, we need
\[
(a\tau^{(1)}+\tau^{(0)})^2=c_S^2|a\xi^{(1)}+\xi^{(0)}|^2.
\]
The above equation, by simple calculation, is equivalent to
\[
2a\left(c_S^2\xi^{(1)}\cdot\xi^{(0)}+c_Sc_P\right)=c_P^2-c_S^2.
\]
Since $|c_S^2\xi^{(1)}\cdot\xi^{(0)}|\leq c_S^2<c_Sc_P$, the above equation always has a solution 
\[
a=\frac{c_P^2-c_S^2}{2(c_S^2\cos\psi+c_Sc_P)}\neq 0.
\]
 Then we get a vector $\zeta^{(2)}\in L_p^{S,+}M$.\\

Assume $\vartheta$ is a null-geodesic with endpoints on $\partial M$, we extend  $\vartheta$ to a (collection of) broken null-geodesics $\Upsilon$ in the following inductive way. If $\vartheta([t_0,t_1])\subset\Upsilon$ is a null-geodesic in $(\overline{M},\overline{g}_\bullet)$ joining two boundary boundary points $\vartheta(t_0)$ and $\vartheta(t_1)$ with $t_1<T$, then add the segments $\vartheta^{\mathrm{ref},P}$ and $\vartheta^{\mathrm{ref},S}$ to $\Upsilon$, where 
\begin{itemize}
\item $\vartheta^{\mathrm{ref},P}:[t_1,t_2^P]$ is a null-geodesic in $(\overline{M},\overline{g}_P)$ connecting two boundary points $\vartheta^{\mathrm{ref},P}(t_1)$ and $\vartheta^{\mathrm{ref},P}(t_2^P)$, with $\vartheta^{\mathrm{ref},P}(t_1)=\vartheta(t_1)$ and $\dot{\vartheta}^{\mathrm{ref},P}(t_1)^\flat\vert_{T\partial M}=\dot{\vartheta}(t_1)^\flat\vert_{T\partial M}$;
\item $\vartheta^{\mathrm{ref},S}:[t_1,t_2^S]$ is a null-geodesic in $(\overline{M},\overline{g}_S)$ connecting two boundary points $\vartheta^{\mathrm{ref},S}(t_1)$ and $\vartheta^{\mathrm{ref},S}(t_2^S)$, with $\vartheta^{\mathrm{ref},S}(t_1)=\vartheta(t_1)$ and $\dot{\vartheta}^{\mathrm{ref},S}(t_1)^\flat\vert_{T\partial M}=\dot{\vartheta}(t_1)^\flat\vert_{T\partial M}$.
\end{itemize}

Following the above procedure, we extend $\vartheta^{(1)}$ and $\vartheta^{(2)}$ to $\Upsilon^{(1)}$ and $\Upsilon^{(2)}$, and extend $\vartheta^{(0)}$ backward to $\Upsilon^{(0)}$. Notice that
\[
\Upsilon^{(1)}([t_-^{(1)},\frac{T}{2}])=\vartheta^{(1)}([t_-^{(1)},\frac{T}{2}]),\quad \Upsilon^{(2)}([t_-^{(2)},\frac{T}{2}])=\vartheta^{(2)}([t_-^{(2)},\frac{T}{2}]),\quad \Upsilon^{(0)}([\frac{T}{2},t_-^{(0)}])=\vartheta^{(0)}([\frac{T}{2},t_-^{(0)}]).
\]
 Since $\gamma^{(1)}([t_-^{(1)},\frac{T}{2}])$ has no conjugate points, $\vartheta^{(1)}([t_-^{(1)},\frac{T}{2}])$ and $\vartheta^{(2)}([t_-^{(2)},\frac{T}{2}])$ intersect only at $p$. Because $\gamma^{(0)}:[\frac{T}{2},t_-^{(0)}]\rightarrow\Omega$ is length-minimizing (w.r.t. the metric $g_P$) for any two points on it and $c_P>c_S$, so $\Upsilon^{(j)}$ $(j=1,2)$ can not intersect $\vartheta^{(0)}((\frac{T}{2},t_-^{(0)}])$. In conclusion, we know that $\Upsilon^{(1)},\Upsilon^{(2)}, \Upsilon^{(0)}$ intersect only at the point $q$, that is,
 \[
 \Upsilon^{(1)}\cap \Upsilon^{(2)}\cap \Upsilon^{(0)}=\{q\}.
 \]

Now, construct $u^{(j)},\widetilde{u}^{(j)}$, $j=1,2,0$ as in Section \ref{threegaussians}. Consider $u^{(j)}$ first, we can write
\[
u^{(j)}=u^{(j)}_{\kappa_j\varrho}+R^{(j)}_{\kappa_j\varrho},\quad u^{(j)}_{\kappa_j\varrho}=u^{(j),\mathrm{inc}}_{\kappa_j\varrho}+u^{(j),\mathrm{ref}}_{\kappa_j\varrho},
\]
for $j=1,2,0$, where $u^{(j),\mathrm{inc}}_{\kappa_j\varrho}$ is a Gaussian beam solution concentrating near $\vartheta^{(j)}$. We have similar expressions for $\widetilde{u}^{(j)}$.
We note that $u^{(1),\mathrm{inc}}_{\kappa_j\varrho},u^{(2),\mathrm{inc}}_{\kappa_j\varrho}$ represent two incident \textit{S}-waves, $u^{(0),\mathrm{inc}}_{\kappa_j\varrho}$ represents an incident \textit{P}-wave. By the above consideration, the solutions can be constructed such that the intersection of the supports of $u^{(1)}_{\kappa_j\varrho},u^{(2)}_{\kappa_j\varrho},u^{(0)}_{\kappa_j\varrho}$ is a small neighborhood of $p$. By the constructions in Section \ref{gaussianbeams}, we have
\[
\begin{split}
\mathbf{a}^{(1)}\vert_{\vartheta^{(1)}}=&\det(Y_S^{(1)})^{-1/2}c_S^{-1/2}\rho^{-1/2}\mathbf{e}^{(1)},\\
\mathbf{a}^{(2)}\vert_{\vartheta^{(2)}}=&\det(Y_S^{(2)})^{-1/2}c_S^{-1/2}\rho^{-1/2}\mathbf{e}^{(2)},\\
\mathbf{a}^{(0)}\vert_{\vartheta^{(0)}}=&\det(Y_P^{(0)})^{-1/2}c_P^{-1/2}\rho^{-1/2}\nabla\varphi^{(0)},\
\end{split}
\]
where $\mathbf{e}^{(j)}$ is a parallel vector field on $\gamma^{(j)}$, which is normal to $\gamma^{(j)}$, for $j=1,2$.
We can, without loss of generality, assume that
\[
\nabla\varphi^{(j)}(q)=c_S^{-1}(q)\xi^{(j)}, \quad \mathbf{e}^{(j)}(q)=c_S^{-1}(q)\alpha^{(j)},\quad j=1,2,\quad \nabla\varphi^{(0)}(q)=c_P^{-1}(q)\xi^{(0)},
\]
with $\alpha^{(j)}\in\mathbb{R}^3$, $|\alpha^{(j)}|=1$, and $\alpha^{(j)}\perp\xi^{(j)}$.

With the above choice of $\zeta^{(1)},\zeta^{(2)},\zeta^{(0)}$ we have (cf. \cite[Lemma 4]{uhlmann2021inverse})
\begin{enumerate}
\item $S(q)=0$;
\item $(\partial_tS(q),\nabla S(q)) =0$;
\item $\Im S(x)\geq cd(x,q)^2$ for $x$ in a neighborhood of $q$, where $c>0$ is a constant,
\end{enumerate}
where $S$ is defined in \eqref{def_S}.
Substituting these solutions into \eqref{eqaulityI}, and using the method of stationary phase, we end up with (cf. \eqref{Iasym})
\[
\varrho^{-1}\frac{\mathrm{i}}{\kappa_1\kappa_2\kappa_3}\mathcal{I}=c_0\mathcal{A}(q)+\mathcal{O}(\varrho^{-1/2})=c_0\widetilde{\mathcal{A}}(q)+\mathcal{O}(\varrho^{-1/2})=\varrho^{-1}\frac{\mathrm{i}}{\kappa_1\kappa_2\kappa_3}\widetilde{\mathcal{I}},
\]
with some constant $c_0\neq 0$.
Letting $\varrho\rightarrow+\infty$, we have
\begin{equation}\label{id_A}
\mathcal{A}(q)=\widetilde{\mathcal{A}}(q)
\end{equation}
We refer to \cite{uhlmann2021inverse} for more details.
\subsection{Determination of $\rho^{-3/2}(\lambda+\mathscr{B})$ and $\rho^{-3/2}(4\mu+\mathscr{A})$}
We assume $\alpha^{(1)}=\alpha^{(2)}=\alpha$ and $\alpha\perp\mathrm{span}\{\xi^{(2)},\xi^{(0)}\}$. Since $\xi^{(1)}\in\mathrm{span}\{\xi^{(2)},\xi^{(0)}\}$, $\alpha\cdot\xi^{(1)}=0$.
Now we compute

\[
\begin{split}
&\det(Y_S^{(1)})^{1/2}\det(Y_S^{(2)})^{1/2}\det(Y_P^{(0)})^{1/2}c_P^{5/2}c_S^{5}\rho^{3/2}\mathcal{A}(q)\\
=&\mathscr{B}(x_0)[(\alpha\cdot\xi^{(1)})(\alpha\cdot\xi^{(0)})(\xi^{(0)}\cdot\xi^{(2)})+(\alpha\cdot\xi^{(2)})(\alpha\cdot\xi^{(0)})(\xi^{(0)}\cdot\xi^{(1)})\\
&\quad\quad\quad\quad+ (\alpha\cdot\xi^{(1)})(\alpha\cdot\xi^{(2)})(\xi^{(0)}\cdot\xi^{(0)})]\\
&+\frac{\mathscr{A}}{4}(x_0)\left((\alpha\cdot\xi^{(1)})(\alpha^{(1)}\cdot\xi^{(0)})(\xi^{(0)}\cdot\xi^{(2)})+(\alpha\cdot\xi^{(2)})(\alpha\cdot\xi^{(0)})(\xi^{(0)}\cdot\xi^{(1)})\right)\\
&+(\lambda+\mathscr{B})(x_0)[(\alpha\cdot\xi^{(1)})(\alpha\cdot\xi^{(0)})(\xi^{(2)}\cdot\xi^{(0)})+(\alpha\cdot\xi^{(2)})(\alpha\cdot\xi^{(0)})(\xi^{(1)}\cdot\xi^{(0)})\\
&\quad\quad+(\alpha\cdot\alpha)(\xi^{(1)}\cdot\xi^{(2)})(\xi^{(0)}\cdot\xi^{(0)})]+2\mathscr{C}(x_0)(\alpha\cdot\xi^{(1)})(\alpha\cdot\xi^{(2)})(\xi^{(0)}\cdot\xi^{(0)})\\
&+(\mu+\frac{\mathscr{A}}{4})(x_0)\Big((\alpha\cdot\alpha)(\xi^{(1)}\cdot\xi^{(0)})(\xi^{(2)}\cdot\xi^{(0)})+(\xi^{(1)}\cdot\xi^{(2)})(\alpha\cdot\xi^{(0)})(\alpha\cdot\xi^{(0)})\\
&\quad \quad+(\alpha\cdot\alpha)(\xi^{(2)}\cdot\xi^{(0)})(\xi^{(1)}\cdot\xi^{(0)})+(\alpha\cdot\xi^{(0)})(\alpha\cdot\xi^{(0)})(\xi^{(1)}\cdot\xi^{(2)})\\
&\quad\quad+(\xi^{(1)}\cdot\alpha)(\xi^{(2)}\cdot\xi^{(0)})(\alpha\cdot\xi^{(0)})+(\xi^{(2)}\cdot\alpha)(\xi^{(1)}\cdot\xi^{(0)})(\alpha\cdot\xi^{(0)})\Big)\\
=& (\lambda+\mathscr{B})(x_0)\xi^{(1)}\cdot\xi^{(2)}+(2\mu+\frac{\mathscr{A}}{2})(x_0)(\xi^{(1)}\cdot \xi^{(0)})(\xi^{(2)}\cdot \xi^{(0)}).
\end{split}
\]
Assume that
\[
\xi^{(1)}\cdot\xi^{(0)}=\cos\psi,
\]
then
\[
\begin{split}
 &\xi^{(1)}\cdot \xi^{(2)}=\xi^{(1)}\cdot(a\xi^{(1)}+\xi^{(0)})=a|\xi^{(1)}|^2+\xi^{(1)}\cdot\xi^{(0)}=\frac{c_P^2-c_S^2}{2(c_S^2\cos\psi+c_Sc_P)}+\cos\psi,\quad \\
&\xi^{(2)}\cdot\xi^{(0)}=(a\xi^{(1)}+\xi^{(0)})\cdot \xi^{(0)} =a\xi^{(1)}\cdot\xi^{(0)}+|\xi^{(0)}|^2=1+\frac{c_P^2-c_S^2}{2(c_S^2\cos\psi+c_Sc_P)}\cos\psi.
\end{split}
\]
Thus we have
\[
\begin{split}
&\det(Y_S^{(1)})^{1/2}\det(Y_S^{(2)})^{1/2}\det(Y_P^{(0)})^{1/2}c_P^{5/2}c_S^{5}\rho^{3/2}\mathcal{A}(q)\\
=&(\lambda+\mathscr{B})\left[\frac{c_P^2-c_S^2}{2(c_S^2\cos\psi+c_Sc_P)}+\cos\psi\right]+(2\mu+\frac{\mathscr{A}}{2})\left[1+\frac{c_P^2-c_S^2}{2(c_S^2\cos\psi+c_Sc_P)}\cos\psi\right]\cos\psi.
\end{split}
\]
Similarly, if we replace $u^{(j)}$ by $\widetilde{u}^{(j)}$ in the above procedure, and notice that $u^{(j),\mathrm{inc}}$ and $\widetilde{u}^{(j),\mathrm{inc}}$ are Gaussian beam solutions along the same null-geodesic $\vartheta^{(j)}$, we have
\[
\begin{split}
\widetilde{\mathbf{a}}^{(1)}\vert_{\vartheta^{(1)}}=&\det(Y_S^{(1)})^{-1/2}c_S^{-1/2}\widetilde{\rho}^{-1/2}\mathbf{e}^{(1)},\\
\widetilde{\mathbf{a}}^{(2)}\vert_{\vartheta^{(2)}}=&\det(Y_S^{(2)})^{-1/2}c_S^{-1/2}\widetilde{\rho}^{-1/2}\mathbf{e}^{(2)},\\
\widetilde{\mathbf{a}}^{(0)}\vert_{\vartheta^{(0)}}=&\det(Y_P^{(0)})^{-1/2}c_P^{-1/2}\widetilde{\rho}^{-1/2}\nabla\varphi^{(0)},\
\end{split}
\]
Then have
\[
\begin{split}
&\det(Y_S^{(1)})^{1/2}\det(Y_S^{(2)})^{1/2}\det(Y_P^{(0)})^{1/2}c_P^{5/2}c_S^{5}\widetilde{\rho}^{3/2}\widetilde{\mathcal{A}}(q)\\
=&\left((\widetilde{\lambda}+\widetilde{\mathscr{B}})\left[\frac{c_P^2-c_S^2}{2(c_S^2\cos\psi+c_Sc_P)}+\cos\psi\right]+(2\widetilde{\mu}+\frac{\widetilde{\mathscr{A}}}{2})\left[1+\frac{c_P^2-c_S^2}{2(c_S^2\cos\psi+c_Sc_P)}\cos\psi\right]\cos\psi\right)(x_0).
\end{split}
\]
Consequently, we have the following identity from \eqref{id_A},
\[
\begin{split}
&\left((\lambda+\mathscr{B})\left[\frac{c_P^2-c_S^2}{2(c_S^2\cos\psi+c_Sc_P)}+\cos\psi\right]+(2\mu+\frac{\mathscr{A}}{2})\left[1+\frac{c_P^2-c_S^2}{2(c_S^2\cos\psi+c_Sc_P)}\cos\psi\right]\cos\psi\right)\rho^{-3/2}(x_0)\\
=&\left((\widetilde{\lambda}+\widetilde{\mathscr{B}})\left[\frac{c_P^2-c_S^2}{2(c_S^2\cos\psi+c_Sc_P)}+\cos\psi\right]+(2\widetilde{\mu}+\frac{\widetilde{\mathscr{A}}}{2})\left[1+\frac{c_P^2-c_S^2}{2(c_S^2\cos\psi+c_Sc_P)}\cos\psi\right]\cos\psi\right)\widetilde{\rho}^{-3/2}(x_0)
\end{split}
\]
By varying $\psi$ in the above identity, we end up with
\begin{equation}\label{equality3}
\rho^{-3/2}(\lambda+\mathscr{B})(x_0)=\widetilde{\rho}^{-3/2}(\widetilde{\lambda}+\widetilde{\mathscr{B}})(x_0),\quad\quad
 \rho^{-3/2}(4\mu+\mathscr{A})(x_0)= \widetilde{\rho}^{-3/2}(4\widetilde{\mu}+\widetilde{\mathscr{A}})(x_0).
 \end{equation}
 Since $x_0$ is an arbitrary point in $\Omega$, and have
 \[
 \rho^{-3/2}(4\mu+\mathscr{A})= \widetilde{\rho}^{-3/2}(4\widetilde{\mu}+\widetilde{\mathscr{A}}),\quad\text{in }\overline{\Omega}.
 \]
  \subsection{Determination of $\left(2\mu+2\mathscr{B}+\mathscr{A}\right)\rho^{-3/2}$}
 We can rescale $\xi^{(2)}$ such that $|\xi^{(2)}|=1$.
Assume  $\alpha^{(1)},\alpha^{(2)}\in\mathrm{span}\{\xi^{(2)},\xi^{(0)}\}$, $|\alpha^{(1)}|=|\alpha^{(2)}|=1$ and $\alpha^{(1)}\perp\xi^{(1)}$, $\alpha^{(2)}\perp\xi^{(2)}$. This means that $\xi^{(1)},\xi^{(2)},\xi^{(0)},\alpha^{(1)},\alpha^{(2)}$ are all in the same plane. Still denote
\[
\xi^{(1)}\cdot\xi^{(0)}=\cos\psi,\quad \xi^{(1)}\cdot\xi^{(2)}=\cos\alpha.
\]
Then we have
\[
\begin{split}
\alpha^{(2)}\cdot\xi^{(1)}&=\sin\alpha,\quad\alpha^{(1)}\cdot \xi^{(2)}=-\sin\alpha,\quad\alpha^{(1)}\cdot\xi^{(0)}=-\sin\psi,\\
&\alpha^{(2)}\cdot\xi^{(0)}=\sin(\alpha-\psi),\quad\alpha^{(1)}\cdot\alpha^{(2)}=\cos\alpha.
\end{split}
\]
Now we calculate
\[
\begin{split}
&\det(Y_S^{(1)})^{1/2}\det(Y_S^{(2)})^{1/2}\det(Y_P^{(0)})^{1/2}c_P^{5/2}c_S^{5}\rho^{3/2}\mathcal{A}(p)\\
=&\mathscr{B}(x_0)(\alpha^{(2)}\cdot\xi^{(1)})(\alpha^{(1)}\cdot\xi^{(2)})(\xi^{(0)}\cdot\xi^{(0)})\\
&+\frac{\mathscr{A}}{4}(x_0)\left((\alpha^{(2)}\cdot\xi^{(1)})(\alpha^{(1)}\cdot\xi^{(0)})(\xi^{(0)}\cdot\xi^{(2)})+(\alpha^{(1)}\cdot\xi^{(2)})(\alpha^{(2)}\cdot\xi^{(0)})(\xi^{(0)}\cdot\xi^{(1)})\right)\\
&+(\lambda+\mathscr{B})(x_0)(\alpha^{(1)}\cdot\alpha^{(2)})(\xi^{(1)}\cdot\xi^{(2)})(\xi^{(0)}\cdot\xi^{(0)})\\
&+(\mu+\frac{\mathscr{A}}{4})(x_0)\Big((\alpha^{(1)}\cdot\alpha^{(2)})(\xi^{(1)}\cdot\xi^{(0)})(\xi^{(2)}\cdot\xi^{(0)})+(\xi^{(1)}\cdot\xi^{(2)})(\alpha^{(1)}\cdot\xi^{(0)})(\alpha^{(2)}\cdot\xi^{(0)})\\
&\quad \quad+(\alpha^{(1)}\cdot\alpha^{(2)})(\xi^{(2)}\cdot\xi^{(0)})(\xi^{(1)}\cdot\xi^{(0)})+(\alpha^{(2)}\cdot\xi^{(0)})(\alpha^{(1)}\cdot\xi^{(0)})(\xi^{(1)}\cdot\xi^{(2)})\\
&\quad\quad+(\xi^{(1)}\cdot\alpha^{(2)})(\xi^{(2)}\cdot\xi^{(0)})(\alpha^{(1)}\cdot\xi^{(0)})+(\xi^{(2)}\cdot\alpha^{(1)})(\xi^{(1)}\cdot\xi^{(0)})(\alpha^{(2)}\cdot\xi^{(0)})\Big)\\
=&-\mathscr{B}(x_0)\sin^2\alpha+\frac{\mathscr{A}}{4}(x_0)\left(-\sin\alpha\sin\psi\cos(\alpha-\psi)-\sin\alpha\sin(\alpha-\psi)\cos\psi\right)\\
&+(\lambda+\mathscr{B})(x_0)\cos^2\alpha\\
&+(\mu+\frac{\mathscr{A}}{4})(x_0)\Big(2\cos\alpha\cos\psi\cos(\alpha-\psi)-2\cos\alpha\sin\psi\sin(\alpha-\psi)\\
&\quad\quad\quad\quad\quad\quad\quad\quad-\sin\alpha\cos(\alpha-\psi)\sin\psi-\sin\alpha\cos\psi\sin(\alpha-\psi)\Big)\\
=&-\mathscr{B}(x_0)\sin^2\alpha-\frac{\mathscr{A}}{4}(x_0)\sin^2\alpha+(\lambda+\mathscr{B})(x_0)\cos^2\alpha+(\mu+\frac{\mathscr{A}}{4})(x_0)\left(2\cos^2\alpha-\sin^2\alpha\right)\\
=&\left(\lambda+2\mu+\mathscr{B}+\frac{\mathscr{A}}{2}\right)(x_0)\cos^2\alpha-\left(\mu+\mathscr{B}+\frac{\mathscr{A}}{2}\right)(x_0)\sin^2\alpha.
\end{split}
\]
Using similar argument as above, we obtain
\begin{equation}\label{equality6}
\begin{split}
&\left(\left(\lambda+2\mu+\mathscr{B}+\frac{\mathscr{A}}{2}\right)\cos^2\alpha-\left(\mu+\mathscr{B}+\frac{\mathscr{A}}{2}\right)\sin^2\alpha\right)\rho^{-3/2}(x_0)\\
=&\left(\left(\widetilde{\lambda}+2\widetilde{\mu}+\widetilde{\mathscr{B}}+\frac{\widetilde{\mathscr{A}}}{2}\right)\cos^2\alpha-\left(\widetilde{\mu}+\widetilde{\mathscr{B}}+\frac{\widetilde{\mathscr{A}}}{2}\right)\sin^2\alpha\right)\widetilde{\rho}^{-3/2}(x_0).
\end{split}
\end{equation}
By \eqref{equality3}, we already have
\begin{equation}\label{equality4}
\left(\lambda+2\mu+\mathscr{B}+\frac{\mathscr{A}}{2}\right)\rho^{-3/2}(x_0)=\left(\widetilde{\lambda}+2\widetilde{\mu}+\widetilde{\mathscr{B}}+\frac{\widetilde{\mathscr{A}}}{2}\right)\widetilde{\rho}^{-3/2}(x_0).
\end{equation}
For $\alpha\neq 0$, we obtain from \eqref{equality6} and \eqref{equality4}
\begin{equation}\label{equality5}
\left(2\mu+2\mathscr{B}+\mathscr{A}\right)\rho^{-3/2}(x_0)=\left(2\widetilde{\mu}+2\widetilde{\mathscr{B}}+\widetilde{\mathscr{A}}\right)\widetilde{\rho}^{-3/2}(x_0).
\end{equation}

\subsection{Determination of $\mathscr{C}$}
Using \eqref{equality4} and \eqref{equality5}, we can conclude that
\[
\rho^{-3/2}(\lambda+\mu)=\widetilde{\rho}^{-3/2}(\widetilde{\lambda}+\widetilde{\mu})\quad \text{in }\overline{\Omega}.
\]
 Since $\frac{\mu}{\rho}=\frac{\widetilde{\mu}}{\widetilde{\rho}}$ and $\frac{\lambda}{\rho}=\frac{\widetilde{\lambda}}{\widetilde{\rho}}$, we have 
 \[
 (\lambda,\mu,\rho)= (\widetilde{\lambda},\widetilde{\mu},\widetilde{\rho})\quad \text{in }\overline{\Omega}.
 \] 
Using \eqref{equality3}, we have
 \[
 (\mathscr{A},\mathscr{B})= (\widetilde{\mathscr{A}},\widetilde{\mathscr{B}})\quad \text{in }\overline{\Omega}.
 \]
 
For the final step, we can apply the result in \cite{uhlmann2021inverse} to obtain
\[
\mathscr{C}=\widetilde{\mathscr{C}}\quad \text{in }\overline{\Omega}
\]
by inverting a \textit{Jacobi-weighted ray transform of the first kind} on $(\Omega,g_P)$. See also \cite{acosta2022nonlinear} for more details.

Invertibility of this kind of weighted ray transform is proved in \cite{feizmohammadi2019inverse} under the no conjugate points assumption. 

Now assume that $(\Omega,g_P)$ satisfies the the foliation condition. Recall that $\partial\Omega$ is strictly convex with respect to $g_P$. Let $\varpi\in C^\infty(\widetilde{\Omega})$ be a defining function of $\partial\Omega$, such that $\varpi>0$ in $\Omega\setminus\partial\Omega$, $\varphi<0$ in $\widetilde{\Omega}\setminus\overline{\Omega}$, and $\varpi$ vanishes non-degenerately on $\partial\Omega$. For any $p\in\partial\Omega$, there exists a function $\tilde{x}\in C^\infty(\widetilde{\Omega})$ with $\tilde{x}(p)=0$, $\mathrm{d}\tilde{x}(p)=-\mathrm{d}\varpi(p)$, such that for $c>0$ small enough, $O_p=\{\tilde{x}>-c\}$ has no conjugate points. Then the above Jacobi-weighted ray transform is invertible on $O_p$.  A layer stripping scheme can be used to derive global invertibility \cite{uhlmann2016inverse,paternain2019geodesic}.
\bibliographystyle{abbrv}
\bibliography{biblio}

\end{document}